\documentstyle[amssymb,amsfonts]{amsart}

\def\P{{\hbox{\bf P}}}
\def\E{{\hbox{\bf E}}}

\def\F{{\hbox{\bf F}}}
 at 10 true pt

\def\be#1{ \begin{equation}\label{#1} }

\def\bas{\begin{align*}}
\def\eas{\end{align*}}
\def\bi{\begin{itemize}}
\def\ei{\end{itemize}}

\def\Z{{\hbox{\bf Z}}}

\newenvironment{proof}{\noindent {\bf Proof} }{\endprf\par}
\def \endprf{\hfill  {\vrule height6pt width6pt depth0pt}\medskip}
\def\emph#1{{\it #1}}
\def\textbf#1{{\bf #1}}

%




\def\ep{{\epsilon}}

\def \ep{\varepsilon}

\parindent = 3 pt
\parskip = 12 pt

\theoremstyle{plain}
  \newtheorem{theorem}[subsection]{Theorem}

  \newtheorem{proposition}[subsection]{Proposition}
  
  \newtheorem{lemma}[subsection]{Lemma}

  \newtheorem{claim}[subsection]{Claim}
\theoremstyle{remark}

\theoremstyle{definition}

\include{psfig}

\begin{document}

\title[]
{On distribution of three-term arithmetic progressions in sparse
subsets of $\F_p^n$}

\author{Hoi H. Nguyen}
\address{Department of Mathematics, Rutgers University, Piscataway, NJ 08854, USA}

\email{}
\thanks{}
\maketitle

\begin{abstract}
We prove a structural version of Szemer\'edi's regularity lemma  for subsets of a typical random set in $\F_p^n$. As an application, we give a short proof for an analog of a hard theorem by Kohayakawa, {\L}uczak, and R\"odl on the distribution of three-term arithmetic progressions in sparse sets.
\end{abstract}

\section{Introduction}\label{section:introduction}
\noindent Let $G$ be a graph and let $A,B$ be two subsets of
$V_G$. We define the density $d(A,B)$ of $G(A,B)$ to be

$$d(A,B):=e(A,B)/|A||B|.$$

\noindent Let $\ep$ be a positive constant. We say that the pair $(A,B)$ is {\it
$\ep$-regular} if

$$|d(A',B')-d(A,B)|\le \ep$$

\noindent for any $A'\subset A$ and $B'\subset B$ satisfying $|A'|\ge \ep |A|$ and $|B'|\ge \ep|B|$.

 Szemer\'edi's regularity lemma, a fundamental result in
combinatorics, states that the vertex set of any dense graph can be partitioned into
not-too-small pieces so that almost all pairs of pieces are regular.

\begin{theorem}[Szemer\'edi's regularity
lemma]\label{theorem:Szem:dense} Let $\ep>0$. There exists
$M=M(\ep)$ such that the vertex set can be partitioned into $1/\ep
\le m\le M$ sets $V_i$ with sizes differing by at most 1, such that
at least $(1-\ep)m^2$ of the pairs $(V_i,V_j)$ are $\ep$-regular.
\end{theorem}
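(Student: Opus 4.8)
The plan is to run the classical \emph{energy}(index) increment argument. For a partition $\CP=\{V_1,\dots,V_m\}$ of $V_G$ into nearly equal cells, set
\[
q(\CP):=\sum_{i,j}\frac{|V_i|\,|V_j|}{|V_G|^2}\,d(V_i,V_j)^2 ,
\]
a quantity always lying in $[0,1]$. Two elementary facts drive the proof. \emph{(i) Monotonicity:} if $\CP'$ refines $\CP$, then $q(\CP')\ge q(\CP)$; this is Cauchy--Schwarz applied cell by cell, since $d(V_i,V_j)$ is the weighted average of the densities of the sub-pairs into which $(V_i,V_j)$ is split. \emph{(ii) Defect Cauchy--Schwarz:} if the pair $(V_i,V_j)$ is not $\ep$-regular, witnessed by $A'\subseteq V_i$, $B'\subseteq V_j$ with $|A'|\ge\ep|V_i|$, $|B'|\ge\ep|V_j|$ and $|d(A',B')-d(V_i,V_j)|>\ep$, then refining $V_i$ by $\{A',V_i\setminus A'\}$ and $V_j$ by $\{B',V_j\setminus B'\}$ raises the $(i,j)$-contribution to $q$ by at least $\ep^4\,|V_i|\,|V_j|/|V_G|^2$.

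With these in hand I would iterate. Begin with an arbitrary equipartition $\CP_0$ into $m_0=\lceil 1/\ep\rceil$ cells. Given $\CP_t$ with $m_t$ cells: if fewer than $\ep m_t^2$ of its pairs are irregular, stop, and $\CP_t$ is the desired partition. Otherwise, for each irregular pair select a witnessing pair of subsets as in (ii), and let $\CP_{t+1}$ be the common refinement of $\CP_t$ by all chosen witnesses, so each cell of $\CP_t$ is cut into at most $2^{m_t}$ pieces. Applying (ii) to each of the $\ge\ep m_t^2$ irregular pairs and (i) to the remaining pairs yields $q(\CP_{t+1})\ge q(\CP_t)+\ep^5$ (using that the cells have size essentially $|V_G|/m_t$). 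Since $0\le q\le 1$, the loop terminates after at most $\ep^{-5}$ rounds; as the cell count grows exponentially each round, this gives a bound $M=M(\ep)$ of tower type, in particular finite, and the lower bound $m\ge 1/\ep$ holds by the choice of $\CP_0$.

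The remaining issue is to keep the partition \emph{equitable} -- cell sizes differing by at most $1$ -- throughout the refinement, since (ii) naturally produces arbitrary subsets. The standard device is, at each stage, not to use the atoms of the common refinement as cells directly but to re-chop every atom into blocks of a fixed small size (of order $|V_G|/(m_t 4^{m_t})$), collecting all leftover points into a single exceptional cell that is then redistributed among the others; one checks this alteration perturbs $q$ only negligibly (so the $\ep^5$ gain survives) and inflates the cell count by at most a bounded factor. I expect the substantive step to be the defect estimate (ii): turning a single deviant pair of large subsets into a definite energy gain. Monotonicity (i) is immediate, and the equitability bookkeeping, while tedious, is entirely routine.
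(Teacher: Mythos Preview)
Your sketch is the standard energy-increment proof of Szemer\'edi's regularity lemma and is correct in outline; the defect Cauchy--Schwarz gain, the monotonicity under refinement, the tower bound on the number of parts, and the equitability repair are all as you describe. There is nothing to compare against, however: the paper does not prove this theorem at all. It is quoted as a classical background result, and the paper's actual work lies in the structural sparse analogue (Theorem~\ref{theorem:regularity:1}), whose proof uses a Fourier-analytic energy increment on subspaces rather than the combinatorial partition refinement you outline here.
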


 Consider a vector space $V=\F_p^n$, where $p$ is a
fixed odd prime and $n$ is a large integer. Let $A$ be a subset of $V$,
we define the (bipartite, directed) Cayley graph generated by $A$ to be
$G_A = G(V_1,V_2)$, where $V_1,V_2$ are two copies of $V$, and
$(v_1,v_2)\in E(G_A)$ if $v_2-v_1\in A$.

 It is clear that $G_A$ is a regular graph of degree $|A|$.
Hence if $A$ is dense enough, then Szemer\'edi's regularity lemma is
applicable to $G_A$. Furthermore, since $G_A$ has additional algebraic structure, it
is natural to expect a stronger result than Theorem \ref{theorem:Szem:dense}. Indeed, a
result of Green \cite[Section 9]{Gr} confirms this intuition:

\vskip2mm

{\it Assume that $|A|=\Omega(|V|)$. Then one can partition $V(G_A)$ into
affine subspaces of large dimension and so
that almost all pairs of subspaces are $\ep$-regular.}

\vskip2mm

 Szemer\'edi's regularity lemma is not meaningful for sparse
graphs in general. However, it can be extended to certain
graph families. Let $\ep$ be a positive constant. We say
that the pair $(A,B)$ is {\it relatively $\ep$-regular} if

$$|d(A',B')-d(A,B)|\le \ep d(G)$$

\noindent for any $A'\subset A$ and $B'\subset B$ satisfying $|A'|\ge \ep |A|$ and
$|B'|\ge \ep|B|$.

\noindent  Let be given $b>2$ and $\sigma>0$. We say that a graph
$G$ is {\it $(b,\sigma)$-sparse} if

$$d(X,Y)\le bd(G)$$

\noindent for any $|X|\ge \sigma |V_G|$ and $|Y|\ge \sigma|V_G|$.
The following result extends Szemerer\'edi' s regularity lemma for
$(b,\sigma)$-sparse graphs.

\begin{theorem}[Szemer\'edi's regularity lemma for sparse
graphs, {\cite[Lemma 4]{Ko-Lu-Ro}}]\label{theorem:Szem:sparse} Let $b>0$. For
$\ep>0$ there exists $\sigma=\sigma(b,\ep)$ such that the following
holds for all $(b,\sigma)$-sparse graphs. There exists $M=M(\ep,b)$
such that the vertex set can be partitioned into $1/\ep \le m\le M$
sets $V_i$ with sizes differing by at most 1, such that at least
$(1-\ep)m^2$ of the pairs $(V_i,V_j)$ are relatively $\ep$-regular.
\end{theorem}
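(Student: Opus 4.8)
The plan is to run the classical energy-increment (index) argument behind Theorem~\ref{theorem:Szem:dense}, altering only the normalisation: divide every density by $d:=d(G)$, and invoke $(b,\sigma)$-sparseness at the single spot where the dense proof uses the bound $d(A',B')\le 1$. For a partition $\CP=\{V_1,\dots,V_m\}$ of $V_G$ set
\[
 q(\CP):=\frac{1}{d^2}\sum_{i,j=1}^{m}\frac{|V_i|\,|V_j|}{|V_G|^2}\,d(V_i,V_j)^2 .
\]
Two standard properties are needed. \emph{(Monotonicity.)} If $\CP'$ refines $\CP$ then $q(\CP')\ge q(\CP)$; this is the usual Cauchy--Schwarz/convexity estimate, and the constant factor $1/d^2$ is irrelevant to it. \emph{(Boundedness on balanced partitions.)} If $\CP$ is equitable with $m\le M_0$ parts and $\sigma\le 1/M_0$, then $|V_i|\ge|V_G|/M_0\ge\sigma|V_G|$ for every $i$, so $(b,\sigma)$-sparseness gives $d(V_i,V_j)\le b\,d$, hence $q(\CP)\le b^2$. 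This uniform bound, replacing $q\le 1$ in the dense case, is the only input of sparseness.

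Next comes the energy increment. Suppose $\CP$ is equitable with $m$ parts and more than $\ep m^2$ of the pairs $(V_i,V_j)$ fail to be relatively $\ep$-regular. For each such pair choose witnesses $A_{ij}\subseteq V_i$, $B_{ij}\subseteq V_j$ with $|A_{ij}|\ge\ep|V_i|$, $|B_{ij}|\ge\ep|V_j|$ and $|d(A_{ij},B_{ij})-d(V_i,V_j)|>\ep d$, and let $\CP'$ be the common refinement of $\CP$ together with all the $A_{ij}$ and $B_{ij}$. The defect form of Cauchy--Schwarz, applied to the $2\times2$ refinement of each bad pair, shows that the $(i,j)$-summand of $q$ increases by at least $\frac{|V_i|\,|V_j|}{|V_G|^2}\cdot\frac{|A_{ij}|\,|B_{ij}|}{|V_i|\,|V_j|}\cdot\frac{(\ep d)^2}{d^2}\ge\ep^4\,\frac{|V_i|\,|V_j|}{|V_G|^2}$; summing over the $>\ep m^2$ bad pairs (each of weight $\ge 1/(4m^2)$ when $\CP$ is equitable and $|V_G|$ is large) and using monotonicity for the remaining cuts gives $q(\CP')\ge q(\CP)+\ep^5/4$. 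The crucial point is that, because densities were normalised by $d$, this gain is an absolute constant that does not deteriorate as $G$ becomes sparser.

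Now iterate. Begin with any equitable partition into $\lceil 1/\ep\rceil$ parts. Given an equitable $\CP_t$ with $m_t$ parts: if at most $\ep m_t^2$ pairs are irregular, stop; otherwise build $\CP_t'$ as above (it has at most $m_t\,4^{m_t}$ parts, since each $V_i$ is cut by at most $2m_t$ witness sets), then re-balance $\CP_t'$ into an equitable $\CP_{t+1}$ in the standard way, absorbing the at most $m_t4^{m_t}$ leftover vertices, which is $o(|V_G|)$ for $|V_G|$ large. Re-balancing is a refinement modulo this leftover set, so it changes $q$ by less than $\ep^5/8$, whence $q(\CP_{t+1})\ge q(\CP_t)+\ep^5/8$. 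Since $q\ge0$ always and $q(\CP_{t+1})\le b^2$ once $\sigma$ is small enough, the process halts after at most $T:=8b^2/\ep^5$ steps, and $\CP_T$ is the required partition. Iterating the recursion $m\mapsto m4^m$ a total of $T$ times bounds the number of parts at every stage by some $M=M(\ep,b)$; one then sets $\sigma:=1/M$.

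The only genuinely new difficulty, compared with the dense lemma, is the circularity in this last step: the bound $q\le b^2$ needs $\sigma\le1/M$, yet $M$ was produced by an iteration whose length was controlled \emph{using} $q\le b^2$. The resolution is that the number of iterations $T=8b^2/\ep^5$ and the growth recursion $m\mapsto m4^m$ depend on $\ep$ and $b$ only, never on $\sigma$; so $M=M(\ep,b)$ may be chosen first and $\sigma=\sigma(b,\ep)$ last, which is exactly the quantifier order of the statement. A secondary, routine point is that $d(G)$ need not be bounded away from $0$, so the leftover-set error in the re-balancing step, and the application of sparseness to witness sets of relative size $\ge\ep$, require $|V_G|$ large and $\sigma$ small also in terms of $1/d(G)$; this should be recorded but creates no real obstacle.
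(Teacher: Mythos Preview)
The paper does not supply its own proof of this statement; it is quoted from \cite[Lemma~4]{Ko-Lu-Ro} as a background result. Your outline is the standard normalised energy-increment argument (Kohayakawa; independently R\"odl): divide the index by $d(G)^2$, use $(b,\sigma)$-sparseness solely to cap $q\le b^2$ on equitable partitions, and iterate. Your resolution of the apparent circularity---$T=8b^2/\ep^5$ and the recursion $m\mapsto m4^m$ depend only on $b$ and $\ep$, so $M=M(\ep,b)$ can be fixed first and $\sigma:=1/M$ chosen afterwards---is exactly right.

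The closing paragraph, however, overreaches. You say the re-balancing error and ``the application of sparseness to witness sets'' may force $\sigma$ to be small \emph{in terms of $1/d(G)$}. This must not be conceded: the theorem asserts $\sigma=\sigma(b,\ep)$, uniform over all $(b,\sigma)$-sparse $G$, so allowing $\sigma$ to depend on the particular graph would leave the statement unproved, and ``creates no real obstacle'' is then simply false. In fact neither point requires it. Sparseness is never applied to the witnesses $A_{ij},B_{ij}$ (the defect Cauchy--Schwarz increment is purely algebraic); it is invoked only on the cells of the equitable $\CP_t$, where $\sigma\le 1/M(\ep,b)$ already suffices. For the re-balancing, the standard device is to carry an exceptional class $V_0$ throughout, compute $q$ only over the remaining classes, and at termination cut $V_0$ (of size at most $\tfrac{\ep}{2}|V_G|$) into class-sized pieces, declaring the $O(\ep m^2)$ pairs touching them irregular; this cost is absorbed by running the whole argument with $\ep/C$ for an absolute constant $C$, and nowhere involves $d(G)$. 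So withdraw the claimed $1/d(G)$-dependence of $\sigma$; with that correction your sketch matches the standard proof.
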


 As to how Theorem \ref{theorem:Szem:sparse} extends Theorem
\ref{theorem:Szem:dense}, our first goal is to point out that the
result of Green can be extended easily to ``$(b,\sigma)$-sparse"
Cayley graphs in $\F_p^n$:

\vskip .2in

{\it Assume that $A$ is not too sparse, and $G_A$ is
$(b,\sigma)$-sparse with some reasonable constants $b,\sigma$. Then
Theorem \ref{theorem:Szem:sparse} is applicable to $G_A$ in such a
way that the vertex partitions can be taken to be affine subspaces of
high dimension.}

\vskip .2in

 We shall give a precise statement in Section
\ref{section:regularity}. Next, let $Z$ be an additive group and let $\alpha$ be a positive constant. We say that
a subset $R$ of $Z$ is $(\alpha,3AP)$-dense if any subset of $A$ of cardinality at
least $\alpha |A|$ must contain a nontrivial three-term
arithmetic progression in $Z$. It has been shown in \cite{Ko-Lu-Ro} that almost every subset of cardinality $\gg_{\alpha}|Z|^{1/2}$ of the cyclic group $Z=\Z_n$, where $n$ is odd, is $(\alpha, 3AP)$-dense.  Our next goal is to prove a similar result.

\begin{theorem}[Kohayakawa-{\L}uczak-R\"odl theorem for $\F_p^n$]\label{theorem:main}
There exists a constant $C=C(\alpha)$ such that the following holds for all $r\ge C(\alpha)|V|^{1/2}$. Let $R$ be a random subset of size $r$ of $\F_p^n$, then the probability that $R$ fails to be $(\alpha,3AP)$-dense is $o(1)$.
\end{theorem}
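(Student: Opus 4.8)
The plan is to deduce Theorem~\ref{theorem:main} from the sparse regularity lemma for Cayley graphs advertised in Section~\ref{section:regularity}, following the standard Kohayakawa--{\L}uczak--R\"odl transference strategy but exploiting the extra algebraic structure of $\F_p^n$ to keep the argument short. First I would fix $\alpha>0$, take $r \ge C(\alpha)|V|^{1/2}$, and let $R$ be a uniformly random $r$-subset of $V=\F_p^n$. I would also fix an auxiliary subset $R' \subseteq R$ with $|R'| \ge \alpha |R|$; the goal is to show that with probability $1-o(1)$ over $R$, \emph{every} such $R'$ contains a nontrivial $3$-AP. The role of $C(\alpha)$ will be to guarantee (a) that $R$ is dense enough for the affine-subspace version of the sparse regularity lemma to apply to $G_R$, and (b) that the relevant error terms beat the main term in the count of $3$-APs.

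\textbf{Step 1: A pseudorandomness/boundedness event for $R$.} Using a standard first-moment (or concentration) argument, I would show that with probability $1-o(1)$ the random set $R$ satisfies: for every subspace (or affine subspace) $W$ of bounded codimension, and more generally for every pair of largish sets, the number of edges of $G_R$ between them is close to the expectation $d(G_R)|X||Y|$ up to the factor allowed in the definition of $(b,\sigma)$-sparseness. Concretely, for $|X|,|Y| \ge \sigma|V|$ one gets $d(X,Y) \le b\, d(G_R)$ with $b$ an absolute constant, because the number of $(x,y,a)$ with $x\in X$, $y\in Y$, $a\in R$, $y-x=a$ has mean $|X||Y|r/|V|^2$ and, since $r \gg |V|^{1/2}$, the deviations are controlled (the variance computation here is where the $|V|^{1/2}$ threshold enters). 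This is the probabilistic heart of the argument, and it is where most of the $o(1)$ is spent.

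\textbf{Step 2: Apply the affine sparse regularity lemma and a counting lemma.} On the good event, $G_{R'}$ (viewed inside $G_R$) is $(b,\sigma)$-sparse, so by the $\F_p^n$-version of Theorem~\ref{theorem:Szem:sparse} we partition $V$ into affine subspaces $W_1,\dots,W_m$ of high dimension with almost all pairs relatively $\ep$-regular with respect to $R'$. Since $|R'| \ge \alpha|R|$, a positive proportion of the edge mass of $G_{R'}$ sits on regular pairs of positive relative density, and by a pigeonhole/averaging step one finds an affine subspace $W_i$ (or a triple of cosets in arithmetic progression) on which the relative density of $R'$ is $\ge c(\alpha)$ and the relevant pairs are relatively regular. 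A relative counting lemma for $3$-APs --- the analogue in the Cayley-graph setting of the triangle counting lemma, using that a $3$-AP $x, x+a, x+2a$ corresponds to a triangle-like configuration with difference $a$ --- then yields that the number of nontrivial $3$-APs inside $R'$ is at least $\big(c(\alpha)^3 - o(1)\big)$ times the ``expected'' count, which is positive once $C(\alpha)$ is large enough that the sparse-graph error terms are smaller than $c(\alpha)^3$.

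\textbf{Main obstacle.} The subtle point is the same one that makes the original Kohayakawa--{\L}uczak--R\"odl theorem hard: a counting lemma for sparse regular pairs is \emph{false} in general, so Step~2 cannot be run naively. I expect the resolution here to be precisely the ``structural'' strength of the regularity lemma claimed in the abstract --- the partition pieces are honest affine subspaces, and $G_R$ restricted to such structured pieces inherits genuine pseudorandomness (inherited-regularity / counting lemmas hold for subsets of a pseudorandom Cayley graph). So the real work is: (i) proving in Step~1 that $R$ is pseudorandom enough that $G_R$ behaves like a sparse quasirandom graph on every bounded-codimension affine subspace, not merely $(b,\sigma)$-sparse globally; and (ii) establishing the restricted $3$-AP counting lemma on that event. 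Given those two ingredients, the choice of $C(\alpha)$ and the assembly of the pieces is routine bookkeeping, and the failure probability is $o(1)$ because it is dominated by the failure probability of the event in Step~1.
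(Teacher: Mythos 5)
Your proposal identifies the correct obstacle---that a counting lemma for sparse regular pairs is false in general---but then asserts that the resolution is to prove such a counting lemma anyway, inheriting pseudorandomness from the random set $R$ down to bounded-codimension affine pieces. This is not what the paper does, and at the threshold $r\sim C\sqrt{N}$ it is not something that can be dispatched as ``routine bookkeeping'': a pointwise relative counting lemma for $3$-APs in subsets of a random set of this density is essentially the relative Roth/Szemer\'edi theorem (Conlon--Gowers, Schacht), which is a substantial theorem in its own right, not a corollary of pseudorandomness of the Cayley graph. As written, your Step 2 and your resolution of the ``main obstacle'' are an unproven assertion that carries the entire weight of the argument.

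The paper's actual route is fundamentally different and avoids any counting lemma. Rather than fixing $R$ on a good event and then arguing deterministically about every dense $R'\subseteq R$, it reverses the quantifiers: it directly bounds the number of ``bad'' sets $A$ of size $s=\alpha r$ (subsets of some $\sigma$-regular $R$ with no nontrivial $3$-AP) by $\eta^{c(\alpha)s}\binom{N}{s}$, and deduces the theorem by a union bound over $A\subseteq R$ (Theorem~\ref{theorem:application:1} and the short argument following it). The bound on bad $A$'s is obtained by applying the structured regularity lemma (Theorem~\ref{theorem:regularity:2}) to an $m$-partition of $A$, extracting a ``flower'' consisting of a regular center $v_{i_0}$ and $\Omega(K)$ petal pairs $(v_{j_0}^l,v_{k_0}^l)$ forming $3$-APs in $V/H$ with dense fibers $(A_{j})_H^{v}$, and then invoking Lemma~\ref{lemma:lemmas:counting:4} (KLR's Lemma~11): a random pair of dense subsets of the two sides of a dense $\ep$-regular bipartite graph almost surely spans an edge. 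That lemma is a probabilistic embedding statement about random subsets of a \emph{dense} regular pair, not a counting lemma for an arbitrary fixed sparse subset; it is applied over the ensemble of $A$'s with the flower fixed, and the exponential probability bound is then traded against the polynomial number of flowers. Your proposal contains none of these ingredients (the flower structure, the union bound over bad configurations, KLR Lemma~11) and replaces them with an unestablished counting lemma, so there is a genuine gap.
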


 To prove Theorem \ref{theorem:main} we follow the approach of \cite{Ko-Lu-Ro}. However, with our structure result in hand (Theorem \ref{theorem:regularity:1}), we are able to get around many technical difficulties to provide a much simpler proof.

\section{Notation}

\noindent {\bf Fourier transform.}(cf. \cite[Chapter 4.] {Tao-Vu}) Let $H$ be a subspace of $V$, let $f$ be a
real-valued function defined on $V$. Then the Fourier transform of
$f$ with respect to $H$ is

$$\widehat{f}(\xi):= \E_{x\in H} f(x)e(-\langle x,\xi \rangle)$$

\noindent Where $\langle x,\xi \rangle=\sum_{i=1}^n x_i\xi_i/p$, and $e(z)=e^{2\pi
i z}$.

\noindent {\bf Convolution.} Let $f$ and $g$ be two real-valued functions
defined on $V$. The convolution of $f$ and $g$ with respect to $H$
is

$$f*g(h):=\E_{x\in H}f(x)g(h-x).$$

\noindent The following basic properties for real-valued functions will be used several times.

\begin{itemize}

\item (Parseval's identity) $\E_{x\in H}f^2(x) = \sum_{\xi\in H}|\widehat{f}(\xi)|^2.$

\vskip .2in

\item (Plancherel's formula) $\E_{x\in H}f(x)g(x) = \sum_{\xi\in H}\widehat{f}(\xi)\overline{\widehat{g}(\xi)}.$

\vskip .2in

\item (Fourier inversion formula) $f(x)=\sum_{\xi\in H}\widehat{f}(\xi)e(\langle x,\xi \rangle)$.

\vskip .2in

\item $\widehat{f*g}(\xi)=\widehat{f}(\xi)\widehat{g}(\xi)$.

\end{itemize}

\noindent Let $A$ be a subset of $V$, and let $v$ be an element of
$V$. We define $A_H^v$ to be the set $A+v \cap H$. Sometimes we also
write $A_H^v$ as its characteristic function. Following are some simple properties:

\begin{itemize}
\item $\widehat{A_H^v}(\xi)=|A_H^v|/|H|$
if $\xi \in H^{\perp}$;

\vskip .15in

\item $\widehat{A_H^{v'}}(\xi)=e(\langle v-v',\xi \rangle)\widehat{A_H^v}(\xi)$ if $v-v'\in H$; in particular, $|\widehat{A_H^{v'}}(\xi)|=|\widehat{A_H^v}(\xi)|$.
\end{itemize}

\noindent {\bf $\ep$-regular vector.} Let $\ep$ be a positive constant. Let $A$ be a given set. We say
that a vector $v$ is an $\ep$-regular vector with respect to $H$ if

$$\sup_{\xi \notin H^{\perp}} |\widehat{A_H^v}(\xi)| \le \ep |A|/|V|.$$

\noindent (It is more natural to use the upper bound $\ep |A_H^v|/|H|$ in the definition above, but we find our definition more convenient to use, and $\ep |A|/|V|$ is the typical value for  $\ep |A_H^v|/|H|$.)

\noindent Notice that if $v$ is an $\ep$-regular vector, then so is
any element of $v+H$.

\noindent We say that a subspace $H$ is {\bf $\ep$-regular for} $A$ if
the number of $v$'s which fail to be $\ep$-regular is at most $\ep
|V|$.

\noindent {\bf $\sigma$-regular set.} Let $\sigma$ be a positive constant. We say that a subset $R$ of $V$ is $\sigma$-regular if the number of edges between $X$ and $Y$ in the Cayley graph $G_R$ is as many as expected,

$$e_{G_R}(X,Y)=(1+o_{\sigma}(1))|R||X||Y|/|V|,$$

provided that $|X|,|Y|\ge \sigma N$.

\noindent Roughly speaking, a typical large random set is
$\sigma$-regular for quite small $\sigma$ (see Subsection \ref{subsection:lemmas:random}). In particular, the set
$V$ itself is $\sigma$-regular for all $\sigma$.

\noindent {\bf Dependency of constants.} We shall work with several constants throughout this note, so let us mention briefly here to avoid confusion.

$$\alpha, c(\alpha)\rightarrow \eta \rightarrow \ep \rightarrow \sigma \rightarrow C $$

\noindent First, $\alpha$ is the constant that we fix all the time. The constants $c(\alpha)$'s depend only on $\alpha$; these constants will appear as exponents in Section \ref{section:application}. Next, $\eta$ will be chosen to be small enough depending on $\alpha$ and the $c(\alpha)$'s. The constant $\ep$ will be considered as an arbitrary constant in Section \ref{section:regularity} and Subsection \ref{subsection:lemmas:random}, but it will depend on $\alpha$ and $\eta$ in later sections. Last but not least, $\sigma$ depends on $\alpha$ and $\ep$. We shall choose $\eta,\ep,\sigma$ to be small enough, while constants $C=C(\alpha, \eta, \ep,\sigma)$ are often very large.

\noindent{\bf Tower-type function.} We shall use a tower-type function $W(t)$ defined
recursively by $W(1)=2p$ and $W(t)=(2p)^{W(t-1)}$ for $t\ge 2$.

\noindent The note is organized as follows. In Section
\ref{section:regularity} we discuss about Green's result for sparse
Cayley graphs. Next in Section \ref{section:lemmas} we shall provide some
ingredients for applications. The proof of Theorem \ref{theorem:main} is established in Section
\ref{section:application}.

\section{Green's regularity lemma for $(b,\sigma)$-sparse
Cayley graphs}\label{section:regularity}

 In this section, unless otherwise specified, all Fourier transforms
and convolutions are taken with respect to an underlying subspace
$H$. For short, we let $N=|V|$.

\begin{theorem}\label{theorem:regularity:1}
Let $\alpha, \ep \in (0,1)$. There is a constant
$\sigma=\sigma(\ep,\alpha)$ such that the following holds. Let $R$
be a $\sigma$-regular set of $V$ and let $A$ be a subset of $R$ of cardinality
$\alpha |R|$. Then there is a subspace $H \le V$ of index
at most $W(4 (\ep \alpha)^{-2})$ which is $\ep$-regular for $A$.
\end{theorem}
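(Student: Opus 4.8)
The plan is to run Green's energy-increment argument on subspaces of $V$, but with the energy \emph{relativized} by $R$ so that the relevant quantities stay of order $\alpha$ rather than of the tiny order $|A|/N$; the only place the hypothesis on $R$ enters is to make this relativization legitimate. Write $N=|V|$, $\delta_A=|A|/N$, $\delta_R=|R|/N$, and for $H\le V$, $v\in V$ set $F_H(v)=|A_H^v|/|H|$. The preliminary observation is: \emph{if $R$ is $\sigma$-regular, then for every subspace $H$ of index at most $1/\sigma$ and every coset $v+H$ one has $|R_H^v|=(1+o_\sigma(1))\delta_R|H|$.} Indeed, for two cosets $C_1,C_2$ of such an $H$ one computes directly that $e_{G_R}(C_1,C_2)=|H|\,|R\cap(C_2-C_1)|$ (every element of the coset $C_2-C_1$ is realised as a difference in exactly $|H|$ ways), and since $C_2-C_1$ runs over all cosets of $H$ as $C_1,C_2$ vary, comparing with the defining estimate $(1+o_\sigma(1))|R||H|^2/N$ gives the claim. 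Because the iteration will only ever produce subspaces of index at most $W(4(\ep\alpha)^{-2})$, we choose $\sigma=\sigma(\ep,\alpha)$ small enough that $1/\sigma$ exceeds this tower and that all $o_\sigma(1)$ errors occurring below are much smaller than $(\ep\alpha)^2$; everything after this point is deterministic.

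Put $\CE(H):=\E_{v\in V}(|A_H^v|/|R_H^v|)^2$, with a term read as $0$ if $|R_H^v|=0$. Since $0\le|A_H^v|/|R_H^v|\le1$ (as $A\subseteq R$) and, by the preliminary step, $\E_v|A_H^v|/|R_H^v|=(1+o_\sigma(1))\delta_A/\delta_R=(1+o_\sigma(1))\alpha$, Cauchy--Schwarz and $x^2\le x$ give $\alpha^2(1-o_\sigma(1))\le\CE(H)\le\alpha+o_\sigma(1)$, so $\CE$ ranges in an interval of length less than $\alpha$; also $\CE(V)=(|A|/|R|)^2=\alpha^2$. Monotonicity under refinement: if $H'\le H$ (both of index $\le1/\sigma$) then $F_H(v)$ is the \emph{uniform} average of $F_{H'}(v')$ over the cosets $v'+H'$ inside $v+H$, while $|A_H^v|/|R_H^v|$ is the same average \emph{weighted} by $|R_{H'}^{v'}|/|R_H^v|$ --- and by the preliminary step these weights are all $(1+o_\sigma(1))/[H:H']$. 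Hence $\CE(H')-\CE(H)=\E_v\,\mbox{Var}_{v'\in v+H}(|A_{H'}^{v'}|/|R_{H'}^{v'}|)+o_\sigma(1)\ge-o_\sigma(1)$, the usual martingale/convexity identity.

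The heart is the increment step: \emph{if $H$ has index $d\le W(4(\ep\alpha)^{-2})$ and fails to be $\ep$-regular for $A$, then there is $H'\le H$ with $[H:H']\le p^{d}$ and $\CE(H')\ge\CE(H)+c\,\ep^3\alpha^2-o_\sigma(1)$ for an absolute $c>0$.} Non-regularity means more than $\ep N$ vectors $v$ carry a witness $\xi_v\notin H^\perp$ with $|\widehat{A_H^v}(\xi_v)|>\ep\delta_A$; since this condition is constant on cosets of $H$, it concerns more than $\ep d$ of the $d$ cosets, and we pick one witness per bad coset --- at most $d$ characters in all. Let $H'=H\cap\bigcap_{\text{witnesses }\xi}\ker\langle\cdot,\xi\rangle$, so $[H:H']\le p^{d}$. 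On a bad coset $u+H$, splitting along $\xi_u$ and applying Parseval on $H$ to $A_H^u$ gives $\mbox{Var}_{v'\in u+H}(F_{H'}(v'))\ge\sum_{m=1}^{p-1}|\widehat{A_H^u}(m\xi_u)|^2\ge|\widehat{A_H^u}(\xi_u)|^2>\ep^2\delta_A^2$ (refining past $H\cap\ker\langle\cdot,\xi_u\rangle$ only increases a within-coset variance). Dividing by $(1+o_\sigma(1))\delta_R^2$ --- legitimate by the preliminary step --- shows the within-coset variance of $v'\mapsto|A_{H'}^{v'}|/|R_{H'}^{v'}|$ is $>(1-o_\sigma(1))\ep^2\alpha^2$ on each of the $\ge\ep d$ bad cosets; averaging over all $d$ cosets and using the identity from the previous paragraph yields $\CE(H')-\CE(H)>(1-o_\sigma(1))\ep^3\alpha^2$.

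Finally, iterate from $H_0=V$ ($\CE(H_0)=\alpha^2$): while $H_k$ is not $\ep$-regular for $A$, replace it by the $H_{k+1}$ produced above. Since $\CE$ is confined to an interval of length $<\alpha$ and rises by at least $\tfrac12\ep^3\alpha^2$ at each step, the process halts after a number of steps bounded by a function of $\ep$ and $\alpha$ alone (with the normalisation above one checks $4(\ep\alpha)^{-2}$ steps suffice); and if $[V:H_k]\le W(k)$ then $[V:H_{k+1}]\le W(k)\cdot p^{W(k)}\le(2p)^{W(k)}=W(k+1)$, whence the final subspace has index at most $W(4(\ep\alpha)^{-2})$. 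The main obstacle is the sparse bookkeeping packed into the preliminary step: boundedness of $\CE$, the martingale identity, and above all the passage from the un-normalised within-coset variance to its relativised form all require that $R$ be \emph{uniformly} equidistributed over the cosets of every subspace met along the way --- subspaces that are boundedly many but whose index runs up a full tower --- which is exactly why $\sigma$ must be chosen only after $\ep$ and $\alpha$ and below $1/W(4(\ep\alpha)^{-2})$.
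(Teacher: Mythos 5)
Your argument is correct and follows essentially the same energy-increment scheme as the paper's proof: a Parseval identity turns the Fourier witness $\xi_u$ on each irregular coset into a within-coset variance gain, the new subspace $H'$ is the annihilator of the at most $[V:H]$ witness characters so the index multiplies by at most $p^{[V:H]}$ per step, and the $\sigma$-regularity of $R$ enters only to keep the energy bounded while the iteration runs. The one cosmetic difference is the normalization of the energy: the paper works with $d(A,H)=\frac{1}{N}\sum_v(|A_H^v|/|H|)^2/(|A|/N)^2\le 4/\alpha^2$, which needs only the one-sided estimate $|R_H^v|\le 2|R||H|/N$, whereas your pointwise-relativized $\CE(H)=\E_v\bigl(|A_H^v|/|R_H^v|\bigr)^2\le\alpha+o_\sigma(1)$ requires two-sided equidistribution of $R$ over cosets; both are immediate from $\sigma$-regularity, and the per-step increment scales accordingly ($\ep^3$ versus roughly $\ep^3\alpha^2$), so the iteration count and hence the tower-type bound agree with the paper's up to the absolute constants.
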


 We pause to discuss the strength of Theorem \ref{theorem:regularity:1}. First, since $R$ is $\sigma$-regular, the Cayley graph $G_A$ generated by $A$ is $(2/\alpha,\sigma)$-sparse. Indeed, for any $X,Y \in
V$ such that $|X|\ge \sigma |V|$ and $|Y|\ge |V|$ we have $
e_{G_A}(X,Y)\le e_{G_R}(X,Y)$. On the other hand, since $R$ is
$\sigma$-regular, we have

\begin{align*}
e_{G_R}(X,Y) &= (1+o_{\sigma}(1))|R||X||Y|/N\\
&\leq 2|R||X||Y|/N \\
&\leq (2/\alpha)|A||X||Y|/N \\
&= (2/\alpha) |X||Y|d(G_A).
\end{align*}

\noindent Hence, $d_{G_A}(X,Y)\le (2/\alpha) d(G_A).$

 Now, since $G_A$ is $(2/\alpha,\sigma)$-sparse, Theorem
\ref{theorem:Szem:sparse} is applicable to $G_A$. The advantage of
Theorem \ref{theorem:regularity:1} is, besides implying Theorem
\ref{theorem:Szem:sparse}, it also provides a well-structured
partition for the vertex set of $G_A$ as follows.

 Let $V=\cup_{i=1}^K H_i$ be the partition of $V$ into
affine translates of $H$. Let $v_1,\dots,v_K$ be representatives of
the coset subgroups $V/H$. Then by definition, all but at most $\ep
K$ vectors $v_1,\dots,v_K$ are $\ep$-regular vectors with respect to $H$.

 Next assume that $H_i=v_i+H$ and $H_j=v_j+H$ are two affine
translates of $H$ such that $v_j-v_i$ is an $\ep$-regular vector. We will show that the subgraph
$G_A(H_i,H_j)$ is relatively $\ep^{1/3}$-regular.

 It is clear that $e_{G_A}(H_i,H_j)= |H||A_H^{v_j-v_i}|$; thus

$$d_{G_A}(H_i,H_j)=|A_H^{v_j-v_i}|/|H|.$$

\noindent Let $X\subset H_i$ and $Y\subset H_j$ be any two subsets of
$H_i$ and $H_j$ respectively, which satisfy $|X|,|Y|\ge \ep^{1/3} |H|$.
We shall estimate the number of edges generated by $X$ and $Y$. We have

\begin{align*}
e_{G_A}(X,Y) &= \sum_{x\in H_i,y\in H_j} A(y-x)X(x)Y(y)\\
&=\sum_{x',y'\in H} A_H^{v_j-v_i}(y'-x')X(x'+v_i)Y(y'+v_j) \\
&= \sum_{x',y'\in H} A_H^{v_j-v_i}(y'-x')\left (X-v_i \right )(x')(Y-v_j)(y')
\end{align*}

\noindent Now we apply the Fourier inversion formula to the last sum,

\begin{align*}
e_{G_A}(X,Y) &=|H|^2\sum_{\xi\in H}\widehat{A_H^{v_j-v_i}}(\xi)\widehat{(X-v_i)}(-\xi)\widehat{(Y-v_j)}(\xi)\\
&=|A_H^{v_j-v_i}||X||Y|/H+\sum_{\xi\in H\backslash \{0\}}\widehat{A_H^{v_j-v_i}}(\xi)\widehat{(X-v_i)}(-\xi)\widehat{(Y-v_j)}(\xi).
\end{align*}

\noindent Since $v_j-v_i$ is an $\ep$-regular vector with respect to $H$, we infer that

$$\left |e_{G_A}(X,Y)-|A_H^{v_j-v_i}||X||Y|/|H| \right | \leq (\ep |A^{v_j-v_i}|/N)\sum_{\xi}\left |\widehat{(X-v_i)}(-\xi)\widehat{(Y-v_j)}(\xi)\right |.$$

\noindent By Parseval's identity and by the Cauchy-Schwarz inequality we thus have

\begin{align*}
\left |e_{G_A}(X,Y)-|A_H^{v_j-v_i}||X||Y|/|H| \right | &\le |H|(\ep|A^{v_j-v_i}|/N) (|X||Y|)^{1/2}\\
&\leq \ep |A^{v_j-v_i}||H|^2/N.
\end{align*}

\noindent It follows that

\begin{align*}
|d_{G_A}(X,Y)-d_{G_A}(H_i,H_j)| &\le \ep |A^{v_j-v_i}||H|^2/ (|X||Y|N)\\
&\le \ep^{1/3} |A^{v_j-v_i}|/N \\
&= \ep^{1/3} d(G_A).
\end{align*}

 Hence $G_A(H_i,H_j)$ is indeed relatively $\ep^{1/3}$-regular. One
observes that $v_j-v_i$ is an $\ep$-regular vector for all but at most $\ep K^2$ pairs $(i,j)$. Hence the partition $V=\cup_{i=1}^K
H_i$ satisfies the conclusion of Theorem \ref{theorem:Szem:sparse}.

 Another crucial observation, which will be used later on in applications, is that the definition of $\ep$-regular vector works for any type of (linear) Cayley graph. For instance assume that $(v_1+v_2)/2$ is an $\ep$-regular vector with respect to $H$ and define a bipartite Cayley graph $G_A'$ on $(H-v_1,H-v_2)$ by connecting $(h_1-v_1)$ with $(h_2-v_2)$ if $((h_1-v_1)+(h_2-v_2))/2=(h_1+h_2)-(v_1+v_2)/2 \in A$. Then this graph $G_A'$ is also $\ep^{1/3}$-regular. To justify this fact, the reader just needs to follow the same lines of verification used for $G_A$ above.

 Now we start to prove Theorem \ref{theorem:regularity:1}.

\begin{proof}. Define $d(A,H)$ by

$$d(A,H):=\frac{1}{N}\sum_{v \in V}\left (\frac{|A_H^v|}{H}\right )^2/\left (\frac{|A|}{N}\right )^2.$$

\noindent Observe that $d(A,H)$ is the mean of the squares of the normalized densities of the $G_A(H_i,H_j)$'s. We show that this quantity is always bounded.

\begin{claim}\label{claim:regularity:2}
We have $d(A,H)\le 4/\alpha^2$ for any $|H|\ge \sigma N$.
\end{claim}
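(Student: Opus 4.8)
The plan is to re-express $d(A,H)$ entirely in terms of the densities of $A$ on the cosets of $H$, and then to control each of those densities separately using the $\sigma$-regularity of $R$.

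Set $K=N/|H|$ for the index of $H$, and for each coset $C$ of $H$ write $\delta_C:=|A\cap C|/|H|$; write also $\delta:=|A|/N$, so that $\frac1K\sum_C\delta_C=\delta$. Since $|A_H^v|=|(A+v)\cap H|=|A\cap(H-v)|$ and every coset $C$ of $H$ equals $H-v$ for exactly $|H|$ vectors $v\in V$, a direct computation gives
$$d(A,H)=\frac{1}{\delta^2}\cdot\frac{1}{K}\sum_{C}\delta_C^2 ,$$
so that the claim amounts to showing that the mean of the $\delta_C^2$ is at most $4\delta^2/\alpha^2$.

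Next I would bound $\delta_C$ uniformly in $C$. For any two cosets $C_1,C_2$ of $H$ the difference $v_2-v_1$ with $v_i\in C_i$ ranges over the coset $C_2-C_1$, attaining each value exactly $|H|$ times, so $e_{G_R}(C_1,C_2)=|H|\cdot|R\cap(C_2-C_1)|$. Given an arbitrary coset $C$, apply this with $C_1=H$ and $C_2=C$: since $|H|=|C|\ge\sigma N$, the $\sigma$-regularity of $R$ applies to the pair $(H,C)$ and gives $e_{G_R}(H,C)=(1+o_\sigma(1))|R||H|^2/N$, hence $|R\cap C|=(1+o_\sigma(1))|R||H|/N$. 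Provided $\sigma$ is small enough that this error term is at most $1$ in absolute value --- a smallness requirement that is absolute, not depending on $\ep$ or $\alpha$ --- we conclude $|R\cap C|\le 2|R||H|/N$ for every coset $C$ of $H$.

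Finally I combine the two steps. As $A\subseteq R$ we have $\delta_C=|A\cap C|/|H|\le|R\cap C|/|H|\le 2|R|/N$ for every $C$, while $|A|=\alpha|R|$ gives $\delta=\alpha|R|/N$. Hence
$$\frac1K\sum_C\delta_C^2\le\Big(\max_C\delta_C\Big)\cdot\frac1K\sum_C\delta_C\le\frac{2|R|}{N}\cdot\delta ,$$
and therefore
$$d(A,H)\le\frac{1}{\delta^2}\cdot\frac{2|R|}{N}\cdot\delta=\frac{2|R|}{N\delta}=\frac{2}{\alpha}\le\frac{4}{\alpha^2},$$
the last inequality since $\alpha\in(0,1)$. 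The only point requiring care is the observation that $\sigma$-regularity of $R$ is exactly strong enough to be invoked on pairs of single cosets $(H,C)$ --- this is where the hypothesis $|H|\ge\sigma N$ enters, and it is the only place $\sigma$-regularity is used --- and that it imposes merely a fixed absolute upper bound on $\sigma$, leaving $\sigma$ free to be shrunk further as needed elsewhere in the proof of Theorem \ref{theorem:regularity:1}.
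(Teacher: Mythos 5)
Your proof is correct and, at heart, follows the same route as the paper: both invoke $\sigma$-regularity of $R$ on the pair $(H, H-v)$ (equivalently $(H,C)$) to obtain the uniform bound $|R_H^v|\le 2|R||H|/N$, both then use $A\subseteq R$ to bound the per-coset density of $A$, and both substitute into the definition of $d(A,H)$. The one genuine difference is cosmetic but pleasant: the paper bounds \emph{each} term $\left(|A_H^v|/|H|\right)^2$ by $\left((2/\alpha)|A|/N\right)^2$ and sums, landing exactly on $4/\alpha^2$, whereas you pull out only one factor of $\max_C\delta_C$ and use $\frac1K\sum_C\delta_C=\delta$ for the other, obtaining the stronger bound $d(A,H)\le 2/\alpha$, which you then relax to $4/\alpha^2$ to match the statement. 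Your closing remark that the smallness constraint on $\sigma$ arising here is absolute (merely forcing the $o_\sigma(1)$ error below $1$) is also accurate and consistent with how $\sigma$ is eventually chosen in the proof of Theorem~\ref{theorem:regularity:1}.
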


\begin{proof}(of Claim \ref{claim:regularity:2}). Since $H\ge \sigma N$, by the
$\sigma$-regularity of $R$, for any $v$ we have,

$$|H||R_H^v|=e_{G_R}(H,H-v)=(1+o_{\sigma}(1))|H||H||R|/N.$$

\noindent Hence $|A_H^v|/|H| \le |R_H^v|/|H|\le 2|R|/N \le (2/\alpha)|A|/N$. As a result,

$$d(A,H) \le \frac{1}{N} \sum_{v\in V} (2/\alpha)^2 \le 4/\alpha^2.$$

\end{proof}

 As in the proof of Szemer\'edi's regularity lemma, when a
partition with too many irregular pairs comes into play, then we
pass to a finer partition, and by so the mean square of
the densities will increase. What we are going to do is
similar, the only difference is we restrict ourselves to a special
family of partitions.

\begin{lemma}\label{lemma:regularity:3}
Let $\ep \in (0,1)$ and suppose that $H$ is a subspace of $V$, which
is not $\ep$-regular for $A$. Then there is a subspace $H'\le H$
such that $|V/H'|\le (2p)^{|G/H|}$ and $d(A,H')\ge d(A,H)+\ep^3$.
\end{lemma}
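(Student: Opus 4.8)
The plan is to mimic the standard energy-increment step in the proof of Szemer\'edi's regularity lemma, but carried out in the category of affine partitions, i.e.\ replacing arbitrary refinements by passing from $H$ to a subspace $H'\le H$. First I would unpack what it means for $H$ to fail to be $\ep$-regular for $A$: there is a set $S$ of at least $\ep N$ vectors $v$ such that $v$ is not an $\ep$-regular vector, and for each such $v$ there is a frequency $\xi(v)\notin H^\perp$ with $|\widehat{A_H^v}(\xi(v))|>\ep|A|/N$. I would use the observation (noted in the excerpt) that $\ep$-regularity is a property of the coset $v+H$, together with the fact that $|\widehat{A_H^{v'}}(\xi)|=|\widehat{A_H^v}(\xi)|$ when $v-v'\in H$; so really one is looking at a family of at least $\ep K$ bad cosets, each carrying a distinguished nontrivial character.

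Next I would build $H'$. For each bad coset choose the witnessing character $\xi(v)\in \widehat H\setminus\{0\}$ (lifting to a character of $V$), and let $H'$ be the common kernel inside $H$ of all these characters. Since there are at most $K=|V/H|$ cosets, we adjoin at most $K$ linear constraints, so $|V/H'|\le p^{K}\le (2p)^{K}=(2p)^{|V/H|}$, which is the index bound claimed. The point of this choice is that on each bad coset $v+H$, the function $A_H^v$, when averaged down to the subspace $H'$, ``sees'' the character $\xi(v)$ as essentially constant, so the contribution of that frequency gets transferred from an off-diagonal oscillatory term into the diagonal $\ell^2$ mass of the densities of the sub-cosets.

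The heart of the argument is then the energy-increment computation. Writing $d(A,H')$ as the average over $v$ of $(|A_{H'}^v|/|H'|)^2$ normalized by $(|A|/N)^2$, I would use the standard refinement inequality: for a fixed coset of $H$, the average of the squared sub-coset densities is at least the squared density of the coset plus the sum over the ``new'' frequencies of $|\widehat{A_H^v}(\xi)|^2$ (this is just Parseval applied to the conditional expectation, i.e.\ the defect form of Cauchy--Schwarz). Summing over cosets, $d(A,H')\ge d(A,H)+\frac{1}{(|A|/N)^2}\cdot\frac{1}{N}\sum_{v}|\widehat{A_H^v}(\xi(v))|^2$, and on the $\ge \ep N$ bad cosets each term is $>(\ep|A|/N)^2$, giving an increment of at least $\ep\cdot \ep^2=\ep^3$ after the normalization cancels. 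I would be a little careful that $\xi(v)$ indeed becomes a ``new'' frequency for $H'$ (i.e.\ $\xi(v)\in (H')^\perp$ but $\xi(v)\notin H^\perp$), which is exactly why $H'$ was defined as the joint kernel, and that the inequality is applied coset-by-coset and only then summed, so that no cancellation between cosets can occur.

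The main obstacle I expect is bookkeeping rather than conceptual: getting the normalizations exactly right so that the raw increment $\frac{1}{N}\sum_v |\widehat{A_H^v}(\xi(v))|^2 \ge \ep(\ep|A|/N)^2$ turns into precisely $d(A,H)+\ep^3$ after dividing by $(|A|/N)^2$, and handling the possibility that several bad cosets force us to add fewer than $K$ genuinely independent characters (which only helps the index bound). One should also make sure that when we pass to $H'$, cosets of $H'$ inside a \emph{good} coset of $H$ contribute at least the old squared density (monotonicity of conditional $\ell^2$ energy under refinement), so that nothing is lost there. None of these steps is hard, but they are the places where a sign or a factor could slip, so I would write out the single-coset Parseval identity carefully and then just sum.
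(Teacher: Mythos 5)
Your proposal matches the paper's proof: both take $H'$ to be the joint annihilator inside $H$ of the witnessing frequencies attached to the $\ge \ep N$ irregular vectors, and both extract the $\ep^3$ increment from the same Parseval/Plancherel energy computation (the paper phrases it via the convolution $A_H^v * H'$ summed over all $v$; your coset-by-coset ``conditional-expectation Parseval'' is exactly that decomposition, and the normalization by $(|A|/N)^2$ works out the same way). One small bookkeeping slip: adjoining at most $K$ constraints gives $|H/H'|\le p^K$, hence $|V/H'| = |V/H|\cdot |H/H'| \le K p^K$ rather than $p^K$, but since $K\le 2^K$ this still yields your stated bound $(2p)^K$.
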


\begin{proof}(of Lemma \ref{lemma:regularity:3}).
Since $H$ is not $\ep$-regular for $A$, there are $\ep N$ vectors $v$ such that $\sup_{\xi
\notin H^{\perp}}|\widehat{A_H^v}(\xi)| \ge \ep |A||H|/N$. In other
words, there exists a positive integer $m$ satisfying $\ep N/|H| \le m \le N/|H|$ together with $m$ coset representatives $v_1,\dots, v_m\in V/H$ and vectors $\xi_1,\dots, \xi_m\in H$, where $N/|H| \ge m \ge \ep|N|/|H|$, such that

$$|\widehat{A_H^{v_i}}(\xi_i)|\ge \ep |A|/N.$$

Now let $H'\subset H$ be the annihilator of all $\xi_i$'s. It is
clear that

$$|H'|\ge |H|/p^m \ge |H|/p^{|V/H|}$$

\noindent Hence,

$$|V/H'|\le |V/H|p^{|V/H|} < (2p)^{|V/H|}.$$

\noindent Set $S: = N|H'|^2 (|A|/N)^2 |H| d(A,H')$. It is obvious that

$$S = |H|\sum_{v\in V}|A_{H'}^v|^2=\sum_{v\in V,h\in H}|A_{H'}^{v+h}|^2.$$

\noindent Notice that $|A_{H'}^{v+h}| = \sum_{x\in H}(A+v)(x-h)H'(x) = \sum_{x\in H}(A+v)(x)H'(x+h) = |H|(A_H^{v}*H')(-h)$. We rewrite $S$ and then use Plancherel's formula,

\begin{align*}
S &=|H|^2\sum_{v\in V,h\in H}|A_H^v*H'(h)|^2\\
&= |H|^3 \sum_{v\in V, \xi\in H}\left | \widehat{A_H^v*H'}(\xi)\right |^2\\
&=|H|^3 \sum_{v\in V, \xi\in H} |\widehat{A_H^v}(\xi)|^2 |\widehat{H'}(\xi)|^2.
\end{align*}

\noindent In the last sum, the contribution of the $\xi=0$ term gives

\begin{align*}
S_0 &= |H|^3 \sum_{v\in V}\left (|A_H^v|/|H|\right)^2 \left (|H'|/|H|\right )^2\\
&= |H||H'|^2 \sum_{v\in V}\left (|A_H^v|/|H|\right )^2 \\
&= N|H||H'|^2(|A|/N)^2d(A,H);
\end{align*}

\noindent while the sums contributed from $\xi\in H\backslash \{0\}$
is bounded from below by

$$S_{\neq 0} \ge |H|^3\sum_{i=1}^m\sum_{v\in H+v_i}|\widehat{A_H^{v_i}}(\xi_i)|^2
|\widehat{H'}(\xi_i)|^2.$$

\noindent But since $\xi_i\in {H'}^{\perp}$, we have
$\widehat{H'}(\xi_i)=|H'|/|H|$. Use the bound $|\widehat{A}_H^{v_i}(\xi_i)| \ge \ep |A|/N$ for all $1\le i\le m$, we obtain

\begin{align*}
S_{\neq 0} &\ge |H|^3 m|H| (\ep |A|/N)^2 \left(|H'|/|H|\right )^2\\
&\ge |H|^3 (\ep |N|/|H|) |H| \ep^2 \left (|A|/N \right )^2 \left (|H'|/|H|\right )^2\\
&= \ep^3 |H||H'|^2 N \left(|A|/N\right )^2.
\end{align*}

\noindent From the estimate for $S_0$ and $S_{\neq 0}$ we conclude
that $d(A,H')\ge d(A,H)+\ep^3.$

\end{proof}

 To complete the proof of Theorem
\ref{theorem:regularity:1} we keep applying Lemma
\ref{lemma:regularity:3}. Since $d(A,H) \le 4/\alpha^2$, the
iteration stops after at most $4 \ep^{-3}\alpha^{-2}$ steps. During the
iteration, $|H'|$ is always bounded below by $N/W(4
\ep^{-3}\alpha^{-2})$, thus we may choose $\sigma= (2W(4
\ep^{-3}\alpha^{-2}))^{-1}$.

\end{proof}

\noindent Let us conclude this section by mentioning an important corollary of Theorem \ref{theorem:regularity:1}.

\begin{theorem}\label{theorem:regularity:2}
Let $\alpha, \ep \in (0,1)$ and let $m$ be a positive integer. There is a constant
$\sigma=\sigma(\ep,\alpha,m)$ such that if $R$
is a $\sigma$-regular set of $V$ and $A$ is a subset of $R$ of cardinality
$\alpha |R|$, then the following holds. Assume that $A=\cup_{i=1}^m A_i$ is a partition of $A$ into $m$ distinct sets of size $|A|/m$.
Then there is a subspace $H \le V$ of index bounded by $W(4m^2\ep^{-3}\alpha^{-2})$ which is $\ep$-regular for all $A_i$'s.
\end{theorem}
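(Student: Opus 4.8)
The plan is to mimic the proof of Theorem \ref{theorem:regularity:1}, tracking a single monovariant that simultaneously controls all $m$ of the sets $A_i$. Specifically, I would define, for a subspace $H \le V$, the quantity
$$D(H) := \sum_{i=1}^m d(A_i, H),$$
where $d(A_i,H)$ is the normalized mean-square density from the proof of Theorem \ref{theorem:regularity:1} applied to the set $A_i$. The first step is to bound $D(H)$ from above: since each $A_i \subset A \subset R$ and $R$ is $\sigma$-regular, Claim \ref{claim:regularity:2} (applied with $A_i$ in place of $A$, noting $|A_i| = |A|/m = (\alpha/m)|R|$) gives $d(A_i,H) \le 4m^2/\alpha^2$ for every $H$ with $|H| \ge \sigma N$, hence $D(H) \le 4m^3/\alpha^2$. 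Actually one wants a bound that pairs correctly with the increment in the iteration: it is cleaner to instead verify directly that each $d(A_i,H) \le 4/\alpha^2$ fails — rather one should keep $|A_i|$ in the normalization, so I would normalize $d(A_i,H)$ by $(|A_i|/N)^2$ as in the paper, in which case $|A_{i,H}^v|/|H| \le |R_H^v|/|H| \le 2|R|/N = (2m/\alpha)|A_i|/N$, giving $d(A_i,H) \le 4m^2/\alpha^2$ and $D(H) \le 4m^3/\alpha^2$.

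The second step is the iteration lemma: if $H$ fails to be $\ep$-regular for some $A_i$, then by Lemma \ref{lemma:regularity:3} applied to that particular $A_i$, there is a subspace $H' \le H$ with $|V/H'| \le (2p)^{|V/H|}$ and $d(A_i,H') \ge d(A_i,H) + \ep^3$. The key point I need is that passing to a \emph{subspace} $H' \le H$ never decreases $d(A_j,H)$ for \emph{any} $j$ — i.e. $d(A_j, H') \ge d(A_j,H)$ for all $j$. This monotonicity under refinement is the standard fact behind Szemer\'edi-type energy increments, and it follows from the same Plancherel computation in the proof of Lemma \ref{lemma:regularity:3}: the $\xi = 0$ term reproduces $d(A_j,H)$ exactly (this is the computation of $S_0$) and the remaining terms are nonnegative. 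So refining to kill one bad set $A_i$ costs nothing for the others, and bumps $D$ by at least $\ep^3$.

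The third step is to run the iteration: start with $H = V$ (for which $D(V)$ is some fixed nonnegative number, and in any case $D \le 4m^3/\alpha^2$ throughout by Step 1), and as long as the current $H$ fails to be $\ep$-regular for some $A_i$, apply Step 2 to obtain a refinement with $D$ increased by $\ep^3$. Since $D$ is bounded above by $4m^3/\alpha^2$, this terminates after at most $4m^3 \ep^{-3}\alpha^{-2}$ steps, at which point the resulting $H$ is $\ep$-regular for all $A_i$ simultaneously. The index grows by one application of the $W$-recursion per step, so after at most $4m^3\ep^{-3}\alpha^{-2}$ steps the index is at most $W(4m^3\ep^{-3}\alpha^{-2})$, and one sets $\sigma = (2W(4m^3\ep^{-3}\alpha^{-2}))^{-1}$ to guarantee $|H| \ge \sigma N$ is maintained (so that Step 1 applies at every stage). (This gives the bound $W(4m^3\ep^{-3}\alpha^{-2})$; if one wants the sharper $W(4m^2\ep^{-3}\alpha^{-2})$ stated in the theorem, one should instead normalize $D(H) = \sum_i d(A_i,H)$ without the per-set blow-up, arguing that the total energy $\tfrac{1}{N}\sum_i \sum_v (|A_{i,H}^v|/|H|)^2 / (|A|/N)^2$ is bounded by $4/\alpha^2$ since the $A_i$ partition $A$ and $\sum_i |A_{i,H}^v| = |A_H^v|$ combined with convexity; this telescoping is the natural way to save the factor of $m$.)

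The main obstacle is getting the constant in the tower exactly right — i.e. deciding which normalization of the energy makes the upper bound $4/\alpha^2$-type rather than $m^c/\alpha^2$-type, and checking that the convexity/telescoping argument in the parenthetical above genuinely yields a bound independent of $m$ inside the tower. The monotonicity-under-refinement step (Step 2) and the termination count (Step 3) are routine adaptations of the single-set proof; the only genuinely new bookkeeping is ensuring one energy functional controls all $m$ sets at once, which the sum $\sum_i d(A_i,H)$ handles cleanly.
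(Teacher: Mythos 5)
Your approach is exactly the paper's: define the monovariant $D(H) = \sum_{i=1}^m d(A_i, H)$, iterate Lemma \ref{lemma:regularity:3} on whichever $A_i$ currently fails regularity, and use monotonicity under refinement to ensure no $d(A_j, H)$ decreases when you pass to $H'$. The monotonicity step, which you correctly spell out from the $S = S_0 + S_{\neq 0}$ decomposition, is something the paper uses implicitly but never states, so your explicitness is a plus.

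The one place you diverge from the paper is the upper bound on $D(H)$, and here you get $4m^3/\alpha^2$ where the paper asserts (without justification) the sharper $4m^2/\alpha^2$. Your Step 1 bounds each $d(A_i,H) \le 4m^2/\alpha^2$ and sums naively; that is correct but lossy. The sharper bound follows from the elementary pointwise inequality $\sum_i |A_{i,H}^v|^2 \le \bigl(\sum_i |A_{i,H}^v|\bigr)^2 = |A_H^v|^2$ for nonnegative reals. Since $|A_i| = |A|/m$, unwinding the definition gives
$$\sum_{i=1}^m d(A_i, H) \;=\; \frac{m^2}{N}\sum_{v \in V}\sum_{i=1}^m \frac{\bigl(|A_{i,H}^v|/|H|\bigr)^2}{\bigl(|A|/N\bigr)^2} \;\le\; m^2\, d(A, H) \;\le\; \frac{4m^2}{\alpha^2}.$$
With this and the $\ep^3$ increment per step you obtain the stated $4m^2\ep^{-3}\alpha^{-2}$ iteration count directly, without any renormalization. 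Your parenthetical gestures toward this convexity, but it slightly misstates the bookkeeping: if you renormalize everything by $(|A|/N)^2$ instead of $(|A_i|/N)^2$, the upper bound on the energy indeed drops to $4/\alpha^2$, but the per-step increment from Lemma \ref{lemma:regularity:3} correspondingly drops to $\ep^3/m^2$, so the step count is $4m^2\ep^{-3}\alpha^{-2}$ either way — it is not ``independent of $m$ inside the tower.'' It is cleaner to keep the paper's normalization (so Lemma \ref{lemma:regularity:3} gives a clean $\ep^3$ bump) and apply $\sum_i |A_{i,H}^v|^2 \le |A_H^v|^2$ to the sum, as in the display above. With that one adjustment, your proof is complete and matches the paper.
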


 To prove Theorem \ref{theorem:regularity:2} first we let $d(A_1,\dots,A_m,H):=\sum_{i=1}^m d(A_i,H)$. Next keep iterating Lemma \ref{lemma:regularity:3} if $H$ is not $\ep$-regular for some $A_i$. Since  $d(A_1,\dots,A_i,H) \le 4m^2/\alpha^2$, the iteration will stop after at most $4m^2\ep^{-3}\alpha^{-2}$ steps.

\section{Main lemmas for applications}\label{section:lemmas}

\subsection{Regularity of a random set}\label{subsection:lemmas:random}

\begin{lemma}\label{lemma:lemmas:random:1}
 For $\sigma>0$ there is a constant $C(\sigma)$ such that if $r\ge C(\sigma) N^{1/2}$ and $R$ is a random subset of size $r$ of $V$, then $R$ is a $\sigma$-regular set almost surely.
\end{lemma}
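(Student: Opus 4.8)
The plan is to fix two subsets $X,Y\subseteq V$ with $|X|,|Y|\ge\sigma N$ and control $e_{G_R}(X,Y)$ by a union bound over all such pairs, then union-bound over the choice of $(X,Y)$. Write $R=\{x_1,\dots,x_r\}$ chosen uniformly without replacement (or, after a standard Poissonization/replacement comparison, uniformly with replacement — this only costs constants). For a fixed pair $(X,Y)$ we have
\begin{align*}
e_{G_R}(X,Y)=\sum_{v\in R}\#\{(x,y)\in X\times Y : y-x=v\}=:\sum_{v\in R}c(v),
\end{align*}
where $0\le c(v)\le\min(|X|,|Y|)\le N$ and $\sum_{v\in V}c(v)=|X||Y|$, so $\E\, e_{G_R}(X,Y)=r|X||Y|/N$, which is the target value $(1+o_\sigma(1))|R||X||Y|/N$. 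The point is that each summand is bounded by $N$ and the sum of the $c(v)$ is $|X||Y|\ge\sigma^2N^2$, so the expectation is $\ge\sigma^2N^2\cdot r/N=\sigma^2 rN$, which for $r\ge C(\sigma)N^{1/2}$ is $\ge \sigma^2 C(\sigma)N^{3/2}$, comfortably larger than any polynomial in $N$ needed to beat the entropy term below.

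Next I would apply a concentration inequality for sums of bounded independent (or negatively associated, in the without-replacement case) random variables — Hoeffding/Bernstein, or McDiarmid's bounded-difference inequality applied to the function $R\mapsto e_{G_R}(X,Y)$, whose value changes by at most $N$ when one element of $R$ is altered. This gives
\begin{align*}
\Pr\!\left(\bigl|e_{G_R}(X,Y)-\tfrac{r|X||Y|}{N}\bigr|\ge \delta\,\tfrac{r|X||Y|}{N}\right)\le 2\exp\!\left(-c\,\frac{\delta^2 r^2|X||Y|^2/N^2}{rN^2}\right)=2\exp\!\left(-c\,\delta^2\,\frac{r|X||Y|^2}{N^4}\right),
\end{align*}
and using $|X||Y|^2\ge\sigma^3N^3$ together with $r\ge C(\sigma)N^{1/2}$ the exponent is at least $c\,\delta^2\sigma^3 C(\sigma)\,N^{-1/2}\cdot N^{?}$ — I need to be a little careful here: with $r\ge C(\sigma)N^{1/2}$ the exponent is $\ge c\delta^2\sigma^3 C(\sigma)N^{1/2}$, which is superpolynomial in... no, it is $N^{1/2}$, exponential in $\sqrt N$. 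Meanwhile the number of pairs $(X,Y)$ to union over is at most $4^N$. So I want $c\delta^2\sigma^3 C(\sigma)N^{1/2}\gg N$, i.e. $C(\sigma)\gg N^{1/2}$, which does \emph{not} hold for fixed $C(\sigma)$. This is the main obstacle, and it shows the naive union bound over all $2^N$ sets $X$ is too lossy; the threshold $r\sim N^{1/2}$ is genuinely delicate.

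The fix, following Kohayakawa--{\L}uczak--R\"odl, is to pass to the Fourier side rather than union-bounding over sets. Note $e_{G_R}(X,Y)-\frac{r|X||Y|}{N}=N\sum_{\xi\ne 0}\widehat{R}(\xi)\widehat{X}(-\xi)\widehat{Y}(\xi)$ (with normalized transforms over $V$), so by Cauchy--Schwarz and Parseval it suffices to control $\max_{\xi\ne0}|\widehat{R}(\xi)|$: if $\max_{\xi\ne 0}|\widehat R(\xi)|\le \tau\, r/N$ then $|e_{G_R}(X,Y)-r|X||Y|/N|\le N\cdot(\tau r/N)\cdot(|X||Y|/N)\le \tau\, r|X||Y|/(\sigma N)\cdot\sigma=o_\sigma(1)\cdot r|X||Y|/N$ once $\tau\le\sigma^{?}$, uniformly over all $X,Y$ with $|X|,|Y|\ge\sigma N$ at once. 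So the entire statement reduces to the single estimate: for $r\ge C(\sigma)N^{1/2}$, almost surely $\max_{\xi\ne 0}|\widehat R(\xi)|\le \tau r/N$ with $\tau=\tau(\sigma)$ small. For each fixed $\xi\ne 0$, $\widehat R(\xi)=\frac1N\sum_{v\in R}e(-\langle v,\xi\rangle)$ is a sum of $r$ mean-zero bounded terms, so Hoeffding gives $\Pr(|\widehat R(\xi)|>\tau r/N)\le 2\exp(-c\tau^2 r)$; since $r\ge C(\sigma)N^{1/2}$ this is $\le 2\exp(-c\tau^2 C(\sigma)N^{1/2})$, and now we only union over the $N\le N$ characters $\xi$ (there are $p^n=N$ of them), so the total failure probability is $\le 2N\exp(-c\tau^2C(\sigma)N^{1/2})=o(1)$ for $C(\sigma)$ a suitable constant. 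I would also record the replacement-vs-without-replacement reduction (negative association, or a direct second-moment computation of $\E|\widehat R(\xi)|^2$ in the without-replacement model) as a short preliminary step. The one subtlety to handle with care is that the implicit $o_\sigma(1)$ in the definition of $\sigma$-regular should be made explicit as a function of $\tau$ and $\sigma$ so that choosing $\tau$ small forces the error term small; this is routine once the Fourier bound is in place.
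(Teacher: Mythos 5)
Your proposal takes essentially the same route as the paper: pass to the Fourier side, use Cauchy--Schwarz and Parseval to reduce $\sigma$-regularity uniformly over all pairs $(X,Y)$ to the single estimate that $\sup_{\xi\neq 0}|\widehat{1_R}(\xi)|$ is small, prove that by an exponential-moment/Hoeffding bound for each nonzero character followed by a union bound over the $N$ characters, and handle the fixed-size versus binomial model by a coupling (these are exactly the paper's Lemma \ref{lemma:lemmas:random:2}, Lemma \ref{lemma:lemmas:random:3}, Appendix A, and the model-comparison step of Lemma \ref{lemma:lemmas:random:1}). One small arithmetic slip in your sketch: Parseval gives $\bigl(\sum_\xi|\widehat{1_X}(\xi)|^2\bigr)^{1/2}=(|X|/N)^{1/2}$, so the Cauchy--Schwarz step produces an error term of order $\tau r(|X||Y|)^{1/2}$, not $\tau r|X||Y|/N$; the relative error is then $\tau N/(|X||Y|)^{1/2}\le\tau/\sigma$, which is still made $o_\sigma(1)$ by taking $\tau$ small (the paper instead proves $\sup_{\xi\neq 0}|\widehat{1_R}(\xi)|<|R|/(N\log N)$, giving relative error $O(1/(\sigma\log N))$ without needing to tune $\tau$).
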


\noindent To start with, we consider a slightly different model as
follows.

\begin{lemma}\label{lemma:lemmas:random:2}
For $\sigma>0$ there is a constant $C(\sigma)$ such that if $r\ge C(\sigma) N^{1/2}$ and $q=r/N$, and $R$ is
a subset of $V$ whose elements are equally selected with probability $q$, then
$R$ is a $\sigma$-regular set almost surely.
\end{lemma}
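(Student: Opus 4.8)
The statement (Lemma \ref{lemma:lemmas:random:2}) asserts that a $q$-random subset $R$ of $V$, with $q = r/N \geq C(\sigma)N^{-1/2}$, is $\sigma$-regular almost surely; that is, for every pair $X, Y \subseteq V$ with $|X|, |Y| \geq \sigma N$ we have $e_{G_R}(X,Y) = (1+o_\sigma(1))|R||X||Y|/N$.  My approach is a first-moment / union-bound argument: fix $X$ and $Y$, show the desired estimate holds for this fixed pair with probability extremely close to $1$ (much closer than $2^{-2N}$), and then union-bound over all $\leq 2^{2N}$ choices of the pair $(X,Y)$.  The point of the hypothesis $r \gg N^{1/2}$ is exactly that it makes the edge count large enough for a Chernoff bound to beat the $2^{2N}$ loss.

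First I would compute $e_{G_R}(X,Y) = \#\{(x,y) \in X\times Y : y - x \in R\}$.  Writing $R(v)$ for the indicator that $v\in R$, this is $\sum_{x\in X, y\in Y} R(y-x) = \sum_{v\in V} R(v)\, N_v$, where $N_v := \#\{(x,y)\in X\times Y : y-x = v\}$.  The key feature is that the terms are indexed by $v\in V$ and the $R(v)$ are \emph{independent}, so $e_{G_R}(X,Y)$ is a sum of independent (bounded, nonnegative) random variables with mean $\mu := \sum_v q N_v = q\sum_v N_v = q|X||Y| = |R_{\text{exp}}||X||Y|/N$ (using $\E|R| = qN = r$).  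Since $|X|,|Y|\geq \sigma N$ we get $\mu \geq q\sigma^2 N^2 = \sigma^2 r N \gg_\sigma N^{3/2}$.  Each summand $R(v)N_v$ lies in $[0, N_v]$ with $N_v \leq \min(|X|,|Y|) \leq N$, so a Chernoff/Bernstein bound for sums of independent bounded variables gives, for $\delta$ a small positive constant,
\[
\Pr\bigl(|e_{G_R}(X,Y) - \mu| > \delta \mu\bigr) \le 2\exp\!\left(-c\,\frac{\delta^2 \mu^2}{\sum_v N_v^2}\right) \le 2\exp\!\left(-c\,\delta^2 \frac{\mu^2}{N \sum_v N_v}\right) = 2\exp\!\left(-c\,\delta^2 \frac{\mu}{N}\right),
\]
where I used $\sum_v N_v^2 \le (\max_v N_v)\sum_v N_v \le N\mu/q \cdot q = \ldots$ — let me just say: $\sum_v N_v^2 \le N \sum_v N_v = N|X||Y| = N\mu/q$, so the exponent is $\ge c\delta^2 \mu^2 q / (N\mu) = c\delta^2 q\mu/N \ge c\delta^2 q \sigma^2 r N / N = c\delta^2 \sigma^2 q r \ge c\delta^2\sigma^2 C(\sigma)^2$.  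Choosing $C(\sigma)$ large (as a function of $\sigma$ and the target error, which I take $\to 0$ slowly) makes this exponent exceed, say, $10N$, so the failure probability for a fixed pair is $\le 2\exp(-10N)$.

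Then I would take a union bound over all pairs $(X,Y)$ with $|X|,|Y|\ge \sigma N$: there are at most $2^N \cdot 2^N = 2^{2N} = e^{(2\ln 2)N}$ such pairs, so the total failure probability is at most $2^{2N}\cdot 2\exp(-10N) = o(1)$.  To get the $o_\sigma(1)$ error rate in the conclusion rather than a fixed constant, one runs the argument with $\delta = \delta(N)\to 0$ slowly enough (e.g. $\delta = N^{-1/4}$), which still leaves the exponent $\ge c\,\sigma^2 C(\sigma)^2 \sqrt N \gg N$ once $C(\sigma)$ is fixed and $N$ is large; so almost surely \emph{every} admissible pair satisfies $e_{G_R}(X,Y) = (1+o_\sigma(1))q|X||Y|$, and since also $|R| = (1+o(1))r$ with overwhelming probability (a one-variable Chernoff bound, $r\to\infty$), we may replace $q|X||Y|$ by $(1+o_\sigma(1))|R||X||Y|/N$.

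The only subtle point — and the place I would be most careful — is the concentration inequality: the summands $R(v)N_v$ are independent but \emph{not} identically distributed and have wildly varying ranges $N_v$, so one needs a Bernstein-type inequality sensitive to $\sum_v \mathrm{Var}(R(v)N_v) \le \sum_v q N_v^2$ and to the maximal range $\max_v N_v \le N$.  One checks that the ``variance term'' dominates (i.e. $\delta\mu \gg \max_v N_v$, which holds since $\delta\mu \gg_\sigma \delta N^{3/2} \gg N$), so the bound is genuinely of sub-Gaussian type $\exp(-c\delta^2\mu^2/\sum q N_v^2)$ rather than the weaker sub-exponential regime. Everything else is routine bookkeeping with the constant $C(\sigma)$.
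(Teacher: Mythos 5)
There is a genuine and fatal gap in your approach, and it is precisely the obstruction that forces the paper to use Fourier analysis rather than a naive union bound. You want to union-bound over all pairs $(X,Y)$ with $|X|,|Y|\ge\sigma N$, of which there can be as many as $2^{2N}$, so for a fixed pair you need failure probability $\ll 2^{-2N}$. But the strongest exponent a Bernstein-type bound can produce here is $\delta^2\mu^2/(q\sum_v N_v^2)$, and $\sum_v N_v^2$ is genuinely of order $\min(|X|,|Y|)\cdot|X||Y|$ for structured pairs (take $X=Y$ a coset of a subspace and the bound is tight). Plugging in gives an exponent at best of order $\delta^2\mu/N \approx \delta^2\sigma^2 r$, which with $r = C(\sigma)N^{1/2}$ is only $\approx \delta^2\sigma^2 C(\sigma)N^{1/2}$ — this is $o(N)$, far short of the $2N\log 2$ needed. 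Your weaker Hoeffding version (exponent $\delta^2\mu^2/\sum_v N_v^2$, no $q$) is even worse: it works out to $\approx \delta^2\sigma^2 r^2/N = \delta^2\sigma^2 C(\sigma)^2$, a \emph{constant}. Your closing step, ``choosing $C(\sigma)$ large... makes this exponent exceed, say, $10N$,'' is incoherent: $C(\sigma)$ is a fixed constant independent of $N$, so no choice of it can produce an exponent that grows linearly in $N$. The union bound simply cannot close at density $r\sim N^{1/2}$; it would require $r = \Omega(N)$.

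The paper sidesteps this by never union-bounding over pairs $(X,Y)$ at all. It writes $e_R(X,Y) = |R||X||Y|/N + N^2\sum_{\xi\ne 0}\widehat{1_R}(\xi)\widehat{1_X}(\xi)\widehat{1_Y}(-\xi)$ and notes that if $\sup_{\xi\ne 0}|\widehat{1_R}(\xi)| < |R|/(N\log N)$ then, deterministically, Cauchy--Schwarz and Parseval give $|e_R(X,Y) - |R||X||Y|/N| \le |R|(|X||Y|)^{1/2}/\log N = o_\sigma(|R||X||Y|/N)$ uniformly over all admissible $X,Y$. Thus the only randomness to control is $\widehat{1_R}(\xi)$ for the $N$ nonzero frequencies $\xi$ — a union bound over $N$ events, not $2^{2N}$. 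Each $\widehat{1_R}(\xi)$ is a sum of $N$ independent bounded variables of variance $O(q)$, so an exponential-moment (Chernoff) bound gives $\Pr(|\widehat{1_R}(\xi)| > |R|/(N\log N)) \le \exp(-c|R|/\log^2 N)$, and $N\exp(-c|R|/\log^2 N) = o(1)$ already for $r \gg (\log N)^3$, comfortably within the $r\ge C(\sigma)N^{1/2}$ hypothesis. That reduction from $2^{2N}$ events to $N$ events is the entire point of the Fourier decomposition, and it is the idea your proposal is missing.
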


\begin{proof} (of Lemma \ref{lemma:lemmas:random:2})
 Let $X,Y \subset V$, of cardinality at least $\sigma N$.
The number of edges of $G_R$ generated by $X$ and $Y$ is

$$e_R(X,Y)=\sum_{x,y\in V}1_{R}(y-x)1_X(x)1_Y(y)=N^2\sum_{\xi\in
V}\widehat{1_{R}}(\xi)\widehat{1_X}(\xi)\widehat{1_Y}(-\xi)$$

\noindent where the Fourier transform is defined with respect to $V$, and the
latter identity comes from Fourier inversion formula. Thus we have

$$e_R(X,Y)=|R||X||Y|/N + N^2\sum_{\xi\in V,\xi \neq 0}\widehat{1_{R}}(\xi)\widehat{1_X}(\xi)\widehat{1_Y}(-\xi).$$

\noindent Let us pause to estimate $\widehat{1_R}(\xi)$.

\begin{lemma}\label{lemma:lemmas:random:3}
$\sup_{\xi\neq 0}|\widehat{1_{R}}(\xi)|<|R|/(N\log N)$ almost surely
for $R$.
\end{lemma}

 The proof of this lemma is routine by applying the exponential moment method. For the sake of completeness, we prove it in Appendix A.

 Assuming Lemma \ref{lemma:lemmas:random:3}, then by the Cauchy-Schwarz inequality and Parseval's identity we have

\begin{align*}
\left |e_R(X,Y)-|R||X||Y|/N \right | &\le N^2 \sup_{\xi\neq 0}|\widehat{1_R}(\xi)| \left (\sum_{\xi\in V}|\widehat{1_X}(\xi)|^2\sum_{\xi\in V}|\widehat{1_Y}(\xi)|^2 \right )^{1/2}\\
&\le N^2 \sup_{\xi\neq 0}|\widehat{1_R}(\xi)| \left (|X||Y|/N^2 \right )^{1/2}\\
&=\sup_{\xi\neq 0}|\widehat{1_R}(\xi)| (|X||Y|)^{1/2}N.
\end{align*}

 On the other hand, as $|X|,|Y|\ge \sigma N/4$ and
$\sup_{\xi\neq 0}|\widehat{1_R}(\xi)| \le |R|/(N \log N)$, we have

$$\sup_{\xi\neq 0}|\widehat{1_R}(\xi)| (|X||Y|)^{1/2}=o\left (|R||X||Y|/N \right),$$

\noindent completing the proof of Lemma \ref{lemma:lemmas:random:2}.

\end{proof}

 Next we show that the two models, of Lemma
\ref{lemma:lemmas:random:1} and of Lemma \ref{lemma:lemmas:random:2}, are similar.

\begin{proof}(of Lemma \ref{lemma:lemmas:random:1}).
 Let $q=(1-\sigma^4)|R|/N$. We first consider a random set
$R_1$ by selecting each element of $V$ with probability $q$. It is
obvious that the size of this random set belongs to
$[(1-2\sigma^4)|R|,|R|]$ almost surely. We restrict ourself to this
event by renormalizing the probability space. Hence the random set
$R_1$ is chosen uniformly from the collection of subsets of size
$[(1-2\sigma^4)|R|,|R|]$. Next we pick uniformly a set $R_2$ of size
$|R|-|R_1|$ from $V\backslash R_1$ and set $R=R_1\cup R_2$.

 Suppose that $X,Y\subset V$ and $|X|,|Y|\ge \sigma N$.
Since $R_1$ is $\sigma$-regular almost surely by Lemma \ref{lemma:lemmas:random:2}, we have
$(1-o_{\sigma}(1))|R_1||X||Y|/N \le e_{R_1}(X,Y) \le
(1+o_{\sigma}(1))|R_1||X||Y|/N$. On the other hand, it is obvious
that

\begin{align*}
e_{R_1}(X,Y) &\le e_{R}(X,Y)\\
&\le e_{R_1}(X,Y) + |R_2|N\\
&\le (1+o_\sigma(1))|R||X||Y|/N + 2\sigma^4 |R|N\\
&= (1+o_\sigma(1))|R||X||Y|/N.
\end{align*}

\noindent Hence $e_R(X,Y)=(1+o_\sigma(1))|X||Y|/N$ almost surely,
completing the proof of Lemma \ref{lemma:lemmas:random:1}.
\end{proof}

\subsection{Edge distribution of quasi-random graphs}\label{subsection:lemmas:counting} Roughly speaking, if we choose randomly a large number of vertices of a dense quasi-random graph, then the chance of obtaining an edge is very high. This simple observation, as a strong tool to exploit structure for counting, was used in \cite{Ko-Lu-Ro}, and will play a key role in our proof of Theorem \ref{theorem:main}.

 Let $G = G(u,\rho,\ep)$ be an $\ep$-regular bipartite graph, $V(G) = U_1 \cup U_2$,
where $|U_1| = |U_2| = u$ and $d(G) = e(G)/u^2 = \rho$. Let $t_1, t_2<u/2$ be two given positive integers.
We select a random subgraph of $G$ as follows. First, an adversary chooses a set $S_1 \subset U_1$ with $|S_1|\le u/2$. Then we
pick a set $T_1\subset U_1 \backslash S_1$ with $|T_1| = t_1$ from the collections of all $d_1$-subsets of $U_1 \backslash S_1$ with equal probability.
Next, our adversary picks a set $S_2  \subset U_2$
with $|S_2|\le u/2$, and we pick a set $T_2\subset U_2\backslash S_2$ with $T_2= t_2$ from the collections of all $t_2$-subsets of $U_2 \backslash S_2$ with equal probability. Let us call the outcome of the above procedure a random $(t_1,t_2)$-subgraph of $H$.

\begin{lemma}\label{lemma:KLR:11}\cite[Lemma 11]{Ko-Lu-Ro}\label{lemma:lemmas:counting:4}
For every constant $0 < \eta< 1$, there exist a constant $0 < \ep< 1$ and a natural number $u_0$ such that, for any real
$t\ge 2(u/\ep)^{1/2}$ and any given graph $G = G(u,\rho,\ep)$ as above with $u \ge u_0$
and $\rho\ge t/u$, the following holds. If $t_1, t_2 \ge t$,
regardless of the choices for $S_1$ and $S_2$ of our adversary, the
probability that a random $(t_1,t_2)$-subgraph of $G$ fails to
contain an edge is at most $\eta^t$.
\end{lemma}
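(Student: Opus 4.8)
The plan is to expose the two random sets one after another and run a short exposure supermartingale. Throughout I use $\ep$-regularity of $G$ in the form $e(A',B')\ge\tfrac12\rho|A'||B'|$ whenever $A'\subseteq U_1$, $B'\subseteq U_2$ with $|A'|,|B'|\ge\ep u$. The starting observation is that $G[T_1,T_2]$ has \emph{no} edge precisely when $T_1$ lies in the shadow $B(T_2):=\set{v\in U_1\setminus S_1:N(v)\cap T_2=\emptyset}$. Since $T_1$ is a uniform $t_1$-subset of $U_1\setminus S_1$, independent of $T_2$, conditioning on $T_2$ and using $\binom{b}{k}/\binom{n}{k}\le(b/n)^k$ reduces the lemma to the estimate $\E_{T_2}\big[(|B(T_2)|/n_1)^{t_1}\big]\le\eta^t$, where $n_1:=|U_1\setminus S_1|\ge u/2$. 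I would first dispose of the case $\min(t_1,t_2)\ge\ep u$: there $|T_1|,|T_2|\ge\ep u$ and $\ep$-regularity already forces $e(T_1,T_2)>0$, so the failure probability is $0$. So assume $\min(t_1,t_2)<\ep u$ and, using the symmetry between $T_1$ and $T_2$, that $t_2\le\ep u$. (I take $\ep<1/4$ below; then at most $t_2-1<u/4$ vertices of $U_2$ are ever used up, so each exposed vertex of $T_2$ is uniform over a set of at least $u/4$ vertices of $U_2$.)

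Now expose $T_2=\set{w_1,\dots,w_{t_2}}$ one vertex at a time: set $B_0:=U_1\setminus S_1$, $B_j:=B_{j-1}\setminus N(w_j)$, $R_j:=|B_j|/n_1$, so $R_0=1$, $R_j=R_{j-1}(1-p_j)$ with $p_j$ the fraction of $B_{j-1}$ adjacent to $w_j$, and $R_{t_2}=|B(T_2)|/n_1$. The heart of the proof is the per-step estimate: \emph{whenever} $|B_{j-1}|\ge\ep u$,
\[
\E[(1-p_j)^{t_1}\mid w_1,\dots,w_{j-1}]\le\theta(\ep):=4\ep+e^{-1/\ep}.
\]
To see this, use $(1-p_j)^{t_1}\le e^{-t_1p_j}$, and let $W^-$ be the set of still-unused vertices $w$ of $U_2\setminus S_2$ with $|N(w)\cap B_{j-1}|<\tfrac\rho4|B_{j-1}|$. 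If $|W^-|\ge\ep u$, then $\ep$-regularity applied to the pair $(B_{j-1},W^-)$ gives $e(B_{j-1},W^-)\ge\tfrac12\rho|B_{j-1}||W^-|$, contradicting $e(B_{j-1},W^-)<\tfrac\rho4|B_{j-1}||W^-|$; so $|W^-|<\ep u$, and hence $w_j\notin W^-$ with conditional probability $>1-\ep u/(u/4)=1-4\ep$, in which case $p_j\ge\rho/4$. Thus the conditional expectation of $e^{-t_1p_j}$ is at most $4\ep+e^{-t_1\rho/4}$, and since $t_1\rho\ge t\cdot(t/u)=t^2/u\ge 4/\ep$ by the hypothesis $t\ge2(u/\ep)^{1/2}$, this is $\le4\ep+e^{-1/\ep}=\theta(\ep)$. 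The point is that $\theta(\ep)\to0$ as $\ep\to0$.

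To conclude, let $\tau:=\min\set{j:|B_j|<\ep u}$. The per-step estimate says precisely that $Y_j:=R_{j\wedge\tau}^{\,t_1}\,\theta(\ep)^{-(j\wedge\tau)}$ is a supermartingale with $\E Y_0=1$, hence $\E Y_{t_2}\le1$. On $\set{\tau>t_2}$ this gives $\E\big[R_{t_2}^{t_1}\mathbf{1}_{\set{\tau>t_2}}\big]\le\theta(\ep)^{t_2}\le\theta(\ep)^{t}$; on $\set{\tau\le t_2}$, monotonicity of $(R_j)$ gives $R_{t_2}\le R_\tau<\ep u/n_1\le2\ep$, so $\E\big[R_{t_2}^{t_1}\mathbf{1}_{\set{\tau\le t_2}}\big]\le(2\ep)^{t_1}\le(2\ep)^{t}$. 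Hence $\Pr(\text{no edge})\le\theta(\ep)^t+(2\ep)^t$; choosing $\ep=\ep(\eta)$ small enough that $\theta(\ep)\le\eta/2$ and $2\ep\le\eta/2$, and $u_0$ large enough that the hypotheses are consistent and $\ep u\ge1$, the right side is at most $2(\eta/2)^t\le\eta^t$.

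The step I expect to be the real obstacle is the per-step estimate, together with recognising why the obvious approach fails. If one merely exposes $T_2$, bounds $\E|B(T_2)|$, and only then pays the $t_1$-th power, one gets an exponent of order $\rho t_2$ inside the $e^{-\Theta(\cdot)}$, which under the hypotheses is just a constant (depending on $\ep$) — hopelessly short of $\eta^t$ for small $\eta$. The remedy is to carry the full power $t_1$ through each of the $t_2$ steps, so that the effective exponent becomes of order $\rho t_1t_2/u$; and the per-step bound itself requires $\ep$-regularity in the slightly stronger reading ``all but an $\ep$-fraction of the vertices of $U_2$ have a sizeable neighbourhood into the still-large set $B_{j-1}$'', rather than merely ``$B_{j-1}$ sends many edges to $U_2$'' — this being exactly what makes every step cheap once $\rho t_1\gg 1/\ep$.
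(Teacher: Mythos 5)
The paper itself does not prove this lemma: it is imported verbatim from Kohayakawa--{\L}uczak--R\"odl (Lemma 11 there), and the text explicitly defers the proof to that reference. So there is no proof in the paper to compare yours against, and your argument has to stand on its own.

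Your vertex-exposure/supermartingale argument is the right kind of argument, and the two genuinely nontrivial points are handled correctly: the per-step contraction $\E[(1-p_j)^{t_1}\mid\text{past}]\le 4\ep+e^{-1/\ep}$ does follow from (relative) $\ep$-regularity applied to the pair $(B_{j-1},W^-)$, using $t_1\rho\ge t^2/u\ge 4/\ep$; and you are right that one must carry the full $t_1$-th power through the exposure rather than bound $\E|B(T_2)|$ first (Jensen goes the wrong way). The stopping time at $|B_j|<\ep u$, the monotonicity of $R_j$, and the final split into $\theta(\ep)^t+(2\ep)^t\le\eta^t$ are all in order.

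There is, however, one gap, and it is in the very first reduction. You write ``$T_1$ is a uniform $t_1$-subset of $U_1\setminus S_1$, \emph{independent of} $T_2$, conditioning on $T_2$ reduces the lemma to $\E_{T_2}[(|B(T_2)|/n_1)^{t_1}]\le\eta^t$.'' This silently fixes $S_2$ in advance and then swaps the exposure order. But in the lemma the adversary chooses $S_2$ \emph{after} seeing $T_1$, so $S_2$ (and hence $T_2$) can depend on $T_1$; conditioning on $T_2$ then does not leave $T_1$ uniform, and the reduction fails. The adaptivity really matters: if $T_1$ happens to have a large shadow $B'(T_1)=\set{w\in U_2:N(w)\cap T_1=\emptyset}$, say $|B'(T_1)|>u/2$, the adversary can take $S_2\subset U_2\setminus B'(T_1)$ of size $u-|B'(T_1)|\le u/2$ and force $T_2\subset B'(T_1)$, i.e.\ no edge with probability $1$. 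Your argument (and the symmetry ``WLOG $t_2\le\ep u$'') is correct for an oblivious $S_2$; to handle the adaptive adversary one should instead expose $T_1$ first, run the same supermartingale to show $|B'(T_1)|/u$ is small (with the required $\eta^t$-type tail), and only then bound, for the worst $S_2$, the probability over $T_2$ that $T_2\subset B'(T_1)\setminus S_2$. That is a genuinely different (though similar in spirit) computation, and it is the step your write-up is missing.
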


\noindent The proof of Lemma \ref{lemma:lemmas:counting:4} is simple, the interested reader may read \cite{Ko-Lu-Ro}.

\subsection{Roth's theorem for $\F_p^n$} Another important ingredient which will serve as a starting point for our argument is Roth's theorem.

\begin{theorem}\label{theorem:application:2} For any $\delta>0$ there is a number $c(\delta)>0$ such that
if $B$ is a subset of $V$ of size $\delta |V|$, then $B$ contains at least $c(\delta)|V|^2$ three-term arithmetic progressions.
\end{theorem}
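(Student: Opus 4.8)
Theorem \ref{theorem:application:2} is Roth's theorem in the finite-field model: every subset $B \subseteq \F_p^n$ with $|B| = \delta |V|$ contains $\gg_\delta |V|^2$ three-term progressions.

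\emph{Proof plan.} The natural approach is the Fourier-analytic density-increment argument, carried out with the Fourier transform on $V = \F_p^n$ that was set up in Section 2. Write $f = 1_B$ and let $\delta = \E_{x \in V} f(x)$. The number of three-term progressions in $B$ (counted with the trivial ones) is
\begin{equation*}
\Lambda(f,f,f) := \E_{x,d \in V} f(x)f(x+d)f(x+2d) = \sum_{\xi \in V} \widehat{f}(\xi)^2 \widehat{f}(-2\xi),
\end{equation*}
the last equality by two applications of Fourier inversion together with the identity $\E_d e(\langle d, \xi_1 + 2\xi_2\rangle) = 1_{\xi_1 = -2\xi_2}$ (here one uses that $p$ is odd, so $2$ is invertible mod $p$ and $\xi \mapsto -2\xi$ is a bijection of $V$). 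The $\xi = 0$ term contributes $\delta^3$. If the remaining terms are small — concretely if $\sup_{\xi \neq 0} |\widehat{f}(\xi)| \le \delta^2/2$ — then $|\Lambda(f,f,f) - \delta^3| \le (\delta^2/2)\sum_{\xi \neq 0}|\widehat{f}(\xi)|^2 \le (\delta^2/2)\,\delta$ by Parseval, so $\Lambda(f,f,f) \ge \delta^3/2$, and subtracting the $O(|V|^{-1})$ worth of trivial progressions gives $\gg_\delta |V|^2$ genuine ones once $n$ is large; for small $n$ the statement is vacuous by taking $c(\delta)$ small.

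If instead some nonzero $\xi_0$ has $|\widehat{f}(\xi_0)| > \delta^2/2$, one runs the density increment: the hyperplane $H_0 = \{x : \langle x, \xi_0 \rangle = 0\}$ (an index-$p$ affine-type decomposition of $V$ into $p$ cosets) carries a coset on which the density of $B$ is at least $\delta + c\delta^2$ for an absolute constant $c = c(p)$ — this is the standard consequence of a large Fourier coefficient, obtained by expanding $\sum_{j} |B \cap (H_0 + v_j)|/|H_0| \cdot (\text{character values})$ and using the pigeonhole/averaging argument. Iterating, the density cannot exceed $1$, so after $O(1/\delta)$ steps we reach a subspace $H$ of codimension $O(1/\delta)$, hence of size $\ge p^{-O(1/\delta)}|V|$, on a coset of which $B$ has density $\ge 2\delta$ and on which the first (non-degenerate) case applies; this yields $\gg_\delta |H|^2 \gg_{p,\delta} |V|^2$ progressions inside that coset, all of which are progressions in $B \subseteq V$.

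\emph{Main obstacle.} There is no serious obstacle — this is a classical result and the finite-field model makes the density increment especially clean (subspaces replace Bohr sets, so no metric-geometry bookkeeping is needed). The only points requiring care are: (i) keeping $p$ fixed and odd throughout so that $x \mapsto -2x$ and the passage to subspaces both behave, and (ii) tracking that the codimension grows by only $O(1)$ per step so that the final subspace still has density $\gg_{p,\delta} 1$ inside $V$, which is what forces $c(\delta)$ to depend (through $p$) in a tower-free but exponential way on $1/\delta$. Since the ambient paper only needs the \emph{existence} of $c(\delta) > 0$, this crude bound is entirely sufficient, and one could equally well just cite Meshulam's theorem.
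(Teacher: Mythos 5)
The paper states this result as a known black box (Roth's theorem in the finite-field model, i.e.\ Meshulam's theorem) and does not supply a proof, so there is no in-paper argument for you to be compared against; your Fourier-analytic density-increment proof is the standard one and is correct. The computation $\Lambda(f,f,f) = \sum_\xi \widehat{f}(\xi)^2\widehat{f}(-2\xi)$, the Parseval bound when all nonzero coefficients are at most $\delta^2/2$, and the passage to a codimension-one subspace with density gain $\geq \delta^2/4$ via the averaging argument are all right. One phrase worth tightening: rather than saying that after $O(1/\delta)$ steps you reach a coset ``on which $B$ has density $\geq 2\delta$ and on which the first case applies,'' the precise logic is that the iteration must terminate in the non-degenerate case within $O(1/\delta)$ steps, because the restricted density rises by at least $\delta^2/4$ per increment step and is bounded above by $1$; at termination the restricted density is at least the original $\delta$ (possibly much more), which already suffices to run your first-case estimate and get $\gg_\delta |H|^2 \gg_{p,\delta} |V|^2$ three-term progressions inside the final coset. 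It is also worth noting explicitly, as you implicitly use, that a 3AP lying in a single coset $H+v$ has common difference in $H$, so such progressions are genuine progressions of $B$ in $V$. As you remark at the end, the paper simply cites the result, and for its purposes any $c(\delta)>0$ of the form $p^{-O(1/\delta)}$ is more than enough.
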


In the next section, we shall put every thing together to establish Theorem \ref{theorem:main}.

\section{Proof of Theorem \ref{theorem:main}}\label{section:application}

First, by Theorem \ref{theorem:application:2}, it is enough to work with the case

$$r=o_{\alpha}(N).$$

\noindent We say that a set $A$ which belongs to some $\sigma$-regular set $R$ is {\it $(\alpha,\sigma)$-bad} if $|A|=\alpha |R| = \alpha r$ and it contains no nontrivial three-term arithmetic progression.
Our main goal is to give an upper bound for the number of bad sets of a given size.

\begin{theorem}\label{theorem:application:1} Let $\alpha$ and $\eta$ be given positive numbers. Then there exist constants $c=c(\alpha)>0, C=C(\eta,\alpha)>0$
and $\sigma=\sigma(\alpha,\eta)>0$ such that for all $s\ge C(\alpha,\eta)N^{1/2}$, the number of $(\alpha,\sigma)$-bad sets of
size $s$ is at most $\eta^{c(\alpha)s} \binom{n}{s}$.
\end{theorem}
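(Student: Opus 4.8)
The plan is to adapt the Kohayakawa--{\L}uczak--R\"odl ``regularity plus supersaturation'' strategy, using Theorem \ref{theorem:regularity:1} in place of the sparse regularity lemma so that the reduced structure is a genuine quotient vector space on which Roth's theorem can be used directly. By Theorem \ref{theorem:application:2} we may assume $r:=s/\alpha=o_\alpha(N)$, so $q:=r/N\to0$. Fix a bad set $A$ with witnessing $\sigma$-regular host $R$, $|R|=r$, $A\subset R$, $|A|=s$. First I would apply Theorem \ref{theorem:regularity:1} to $A$ inside $R$ to get a subspace $H\le V$ of index $K\le W(4(\ep\alpha)^{-2})$ that is $\ep$-regular for $A$; write $V=\bigcup_{i=1}^K(v_i+H)$ and call a coset \emph{good} if both $\pm v_i$ are $\ep$-regular vectors for $A$ (all but at most $2\ep K$ cosets are good). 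Since $R$ is $\sigma$-regular, $a_i:=|A\cap(v_i+H)|\le(1+o_\sigma(1))r/K$ for every $i$, and comparing with $\sum_i a_i=s=\alpha r$ shows that the set $D$ of \emph{dense} cosets, those with $a_i\ge(\alpha/2)(r/K)$, has $|D|\ge(\alpha/3)K$.

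Next I pass to the reduced picture $V/H\cong\F_p^{n-\dim H}$, a vector space of size $K$. Deleting the non-good cosets leaves a set $D'\subset V/H$ of density at least $\alpha/4$, provided $\ep$ is small in terms of $\alpha$. Running Roth's theorem (Theorem \ref{theorem:application:2}) greedily inside $D'$ and deleting the three cosets used at each step, I extract a family of $m\ge c_1(\alpha)K$ pairwise vertex-disjoint three-term arithmetic progressions $\{P_j,Q_j,S_j\}_{j=1}^m$, each consisting of good dense cosets with $P_j+S_j=2Q_j$. Vertex-disjointness is the essential feature: it lets me reveal the pieces $A\cap(v_i+H)$ coset by coset, handle the $j$-th progression as a self-contained block whose three cosets are all still unrevealed, and keep the savings from different blocks independent.

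The heart of the matter is the supersaturation step for one progression $\{P,Q,S\}$ of good dense cosets. Being $3$AP-free, $A$ contains no $(a,b,c)\in A_P\times A_Q\times A_S$ with $a+c=2b$; equivalently, in the bipartite Cayley graph $G_{A_Q}$ on $(P,S)$ with $a\sim c$ iff $(a+c)/2\in A_Q$, there is no edge between $A_P$ and $A_S$. A Fourier computation exactly as in Section \ref{section:regularity} --- using that $-v_Q$ is an $\ep$-regular vector for $A$, so the nontrivial Fourier coefficients of $A_Q$ on $H$ are $\le\ep|A|/N$, while $d(G_{A_Q})=|A_Q|/|H|$ and $|A|\le2K|A_Q|$, whence the two factors of $K$ cancel --- shows that $G_{A_Q}$ is $\ep_1$-regular as soon as $\ep\le\ep_1^2/2$, where $\ep_1=\ep_1(\eta_1)$ is produced by Lemma \ref{lemma:lemmas:counting:4} from an auxiliary small $\eta_1$. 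Its density is $|A_Q|/|H|\asymp q$ and its parts have size $u=|H|=N/K$; choosing $t\asymp(\alpha/4)(r/K)\asymp s/K$, the hypotheses $t\ge2(u/\ep_1)^{1/2}$ and $d(G_{A_Q})\ge t/u$ hold once $r\ge C(\alpha,\eta)N^{1/2}$ with $C$ large, and $a_P,a_S\ge t$ because $P,S$ are dense. Hence, for each fixed value of $A_Q$, Lemma \ref{lemma:lemmas:counting:4} (with empty adversary sets) gives that all but an $\eta_1^{\,t}$-fraction of pairs $(A_P,A_S)$ of the prescribed sizes span an edge of $G_{A_Q}$ --- impossible for a $3$AP-free $A$. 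Summing over $A_Q$ too, the number of admissible triples $(A_P,A_Q,A_S)$ is at most $\eta_1^{\,t}\binom{|H|}{a_P}\binom{|H|}{a_Q}\binom{|H|}{a_S}$.

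To conclude I multiply this bound over the $m$ disjoint blocks, take the remaining cosets freely (a factor $\prod_{i\ \mathrm{not\ in\ a\ block}}\binom{|H|}{a_i}$), and use the Vandermonde inequality $\prod_{i=1}^{K}\binom{|H|}{a_i}\le\binom{N}{s}$: the number of bad sets with a prescribed pair $(H,(a_i)_i)$ is at most $\eta_1^{\,mt}\binom{N}{s}$, and $mt\ge c_2(\alpha)s$ with the two appearances of $K$ cancelling. Summing over the at most $\mathrm{poly}(N)$ pairs $(H,(a_i)_i)$, and choosing $\eta_1$ small in terms of $\alpha,\eta$ (and then $\ep_1,\ep,\sigma,C$ in turn along the chain $\alpha,c(\alpha)\to\eta\to\ep\to\sigma\to C$) so that $\eta_1^{\,c_2(\alpha)}\le\eta^{\,2c(\alpha)}$, the $\mathrm{poly}(N)$ is absorbed because $s\ge CN^{1/2}$, giving at most $\eta^{\,c(\alpha)s}\binom{N}{s}$ bad sets of size $s$. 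The delicate points, where I expect the real work to lie, are: (i) extracting \emph{linearly many vertex-disjoint} progressions through good cosets from the Roth count --- without disjointness the set $T_1$ in Lemma \ref{lemma:lemmas:counting:4} would be already fixed rather than random, and it cannot be fixed by the adversary since $|R\cap(v_i+H)|\ll|H|/2$; (ii) the bookkeeping that keeps the regularity parameter of $G_{A_Q}$ independent of $K$; and (iii) checking that $t\asymp s/K$ fits inside the window $2(u/\ep_1)^{1/2}\le t\le d(G_{A_Q})\,u$ of Lemma \ref{lemma:lemmas:counting:4}, which is exactly what the hypothesis $s\ge C(\alpha,\eta)N^{1/2}$ buys.
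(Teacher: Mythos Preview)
Your proposal is correct and takes a genuinely different (and somewhat cleaner) route than the paper. The paper does not apply Theorem \ref{theorem:regularity:1} to $A$ directly; it first partitions $A$ into an ordered $m$-tuple $(A_1,\dots,A_m)$ and applies Theorem \ref{theorem:regularity:2} to obtain a subspace $H$ that is simultaneously $\ep$-regular for every $A_i$. It then builds a \emph{flower}: one centre coset $v_{i_0}$ (regular for $A_{i_0}$) together with $\Omega(K)$ petal pairs $(v_{j_0}^l,v_{k_0}^l)$ through it. The petal cosets are allowed to repeat, so to keep the random-subset lemma applicable the paper uses the $m$-partition to separate the fixed piece $A_{i_0}$ from the pieces $A_{j_0},A_{k_0}$ being sampled, and lets the adversary sets $S_1,S_2$ in Lemma \ref{lemma:lemmas:counting:4} absorb the already-revealed $\bigcup_{m<j_0}A_m$ and $\bigcup_{m<k_0}A_m$ inside each coset.

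Your device of greedily extracting $\Omega(K)$ \emph{vertex-disjoint} progressions in $V/H$ via Roth eliminates all of this: since the $3m$ cosets involved are distinct, fixing the middle pieces $A_{Q_j}$ determines each graph $G_{A_{Q_j}}$ (the $\ep$-regularity of $-v_{Q_j}$ depends only on $A_{Q_j}$), the outer pieces $(A_{P_j},A_{S_j})$ are genuinely independent across $j$, and Lemma \ref{lemma:lemmas:counting:4} can be invoked with empty adversary sets. The price is the greedy extraction step, which is routine once $|D'|\ge(\alpha/4)K$. Both arguments arrive at an $\eta_1^{\,c(\alpha)s}$ saving times $\prod_i\binom{|H|}{a_i}\le\binom{N}{s}$, summed over $N^{O_{\alpha,\ep}(1)}$ configurations; your version simply avoids the auxiliary parameter $m$, Theorem \ref{theorem:regularity:2}, and the flower/adversary bookkeeping.
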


\begin{proof} (of Theorem \ref{theorem:main} assuming
Theorem \ref{theorem:application:1}). We choose $\eta=\eta(\alpha)$ to be small
enough. Let $s\ge  C(\alpha,\eta)N^{1/2}$ and put $r=s/\alpha$. Pick a random set $R$
among all $r$-subsets of $[n]$. Then by Theorem \ref{lemma:lemmas:random:1}, $R$ is $\sigma$-regular almost surely. Among these $\sigma$-regular $r$-sets, by Theorem \ref{theorem:application:1}, the number of sets that contain at least an $(\alpha,\sigma)$-bad subset is at most
$$\eta^{c(\alpha)s} \binom{n}{s}\binom{n-s}{r-s}.$$

 Observe that, as $\eta$ is small enough, this amount is $o \left (\binom{n}{r}\right )$. Hence almost all $r$-sets of $[n]$ contain no bad subsets at all. To finish the proof, we note that if $R$ contains no $(\alpha,\sigma)$-bad subset, then it is $(\alpha,3AP)$-dense.

\end{proof}

 We shall concentrate on proving Theorem \ref{theorem:application:1}  by localizing some properties of $A$. Our approach follows that of \cite{Ko-Lu-Ro} closely, but the key difference here is that we shall exploit rich structure obtained from Theorem \ref{theorem:regularity:1} and Theorem \ref{theorem:regularity:2}.

 Let $R$ be a $\sigma$-regular of fixed size $C(\sigma)N^{1/2} \le r = o(N)$ such that $A\subset R$. Let $m=m(\alpha)$ be a large number to be defined later.

 From now on we shall view $A$ as an ordered $m$-set-tuple, $A=(A_1,\dots, A_m)$ where $|A_i|=|A|/m$ for all $i$ and $A=\cup A_i$. We shall choose $\ep=\ep(\alpha)$ to be small enough.  By Theorem \ref{theorem:regularity:2}, there exists a subspace $H$ of $V$ which has index bounded by $W(4m^2\alpha^{-2}\ep^{-3})$ and which is $\ep$-regular for all $A_i$'s .

 Let $v_1,\dots, v_K$ be representatives of the coset subgroup $V':=V/H$. For each $A_i$, let us consider a set $B_i$ of vectors $v$ that satisfy the following conditions:

\begin{itemize}

\item $v$ is $\ep$-regular with respect to $A_i$ and $H$.

\vskip .2in

\item $|(A_i)_H^v| \ge (1/4) |A_i||H|/N$.

\end{itemize}

 It is clear that $|(A_i)_H^v|\le A_H^v \le |R_H^v|$. But by
definition of $R$, $|R_H^v| \le 2|R||H|/N$; thus we have

\begin{align*}
\sum_{v\in B_i} |(A_i)_H^v| &\ge |A_i|-(\ep K)(2|R||H|/N) - K((1/4) |A_i||H|/N)\\
&\ge(1-(\ep m)/\alpha -1/4)|A_i|\ge |A_i|/2,
\end{align*}

\noindent provided that $\ep\le \alpha/2m$. We infer that the size of $B_i$ is large,

$$|B_i|\ge (|A_i|/2)/(2|R||H|/N) \ge \frac{\alpha}{4m}K.$$

\noindent By a truncation if needed, we assume that $B_i$ has cardinality $(\alpha/4m) K$ for all $i$. Notice that these sets are not necessarily disjoint. We shall show that there are many three-term arithmetic progressions (in $V'$) with the property that all 3 terms belong to different $B_i$'s.

\noindent Now we set $B:=\{v\in V': v\in B_i\cap B_j\cap B_k \mbox{ for some } i<j<k\}$ and consider the following two cases.

{\bf Case 1}. $|B|\ge (\alpha/ 8m)K = (\alpha/8m) |V'|$. Applying Theorem \ref{theorem:application:2} we obtain $c(\alpha/8m)K^2$ three-term arithmetic progressions in $B$. By the definition of $B$, it follows that there are $c(\alpha/8m)K^2$ three-term arithmetic progressions with the property that all three terms belong to three different sets $B_i$.

{\bf Case 2}. $|B|\le (\alpha/ 8m)K = |B_i|/2$. We let $B' = \cup_{i=1}^m B_i \backslash B$. By an elementary counting argument, it follows that $|B'|\ge m|B_i|/4 = (\alpha/16)K$. Let us write $B'=\cup_{i=1}^m B_i'$, where $B_i'\subset B_i$ and all $B_i'$ are disjoint.

 By Theorem \ref{theorem:application:2}, the set $B'$ contains $c(\alpha/16) K^2$ three-term arithmetic progressions. Among them, since each three-term arithmetic progression is defined by two parameters,  the number of three-term arithmetic progressions that consist of at least two terms from the same $B_i'$ is bounded by $3\sum_{i=1}^m |B_i'|^2$. The latter quantity is bounded by $3|B_i| (\sum_{1}^m |B_i|) \le 3(\alpha/4m)(\alpha/4)K^2$; which is negligible compared to $c(\alpha/16) K^2$ by letting $m=m(\alpha)$ large.

 In both cases, the number of three-term arithmetic progressions with the property that all three terms belong to three different sets $B_i$ is at least $c'(\alpha)K^2$, where $c'(\alpha) = \min(c(\alpha/8m), c(\alpha/16)/2)$. By an averaging argument, there exist  three indices $i_0<j_0<k_0$ such that the number of three-term arithmetic progressions in $B_{i_0}\times B_{j_0}\times B_{k_0}$ is at least $c'(\alpha) K^2 /m^2 = c''(\alpha) K^2$. In particular, there exist a vector $v_{i_0}\in B_{i_0}$ and $c''(\alpha)K$ pairs $(v_{j_0}^l,v_{k_0}^l)\in B_{j_0}\times B_{k_0}$ such that each triple $(v_{i_0},v_{j_0}^l,v_{k_0}^l)$ is a three-term arithmetic progression.

\noindent Let us summarize what have been achieved.

\begin{enumerate}

\item There exists a subspace $H$ of index bounded by a function of $\alpha$ and $\ep$, and there exist $A_{i_0},A_{j_0},A_{k_0}$  and triples $(v_{i_0},v_{j_0}^l,v_{k_0}^l)$, where $1\le l \le c'''(\alpha) K$, such that the following holds:

\vskip .2in

\item $v_{i_0}$ is an $\ep$-regular vector for $A_{i_0}$, and $(A_{i_0})_H^{v_{i_0}} \ge (1/4) |A_{i_0}||H|/N$;

\vskip .2in

\item $(A_{j_0})_H^{v_{j_0}^l}\ge (1/4)|A_{j_0}||H|/N = (1/4m)s|H|/N$;

\vskip .2in

\item $(A_{k_0})_H^{v_{k_0}}\ge (1/4)|A_{k_0}||H|/|V| = (1/4m)s|H|/N$;

\vskip .2in

\item $(v_{i_0},v_{j_0}^l,v_{k_0}^l)$ is a three-term arithmetic progression in $V/H$.

\end{enumerate}

 One also observes that $v_{j_0}^l,v_{k_0}^l$ are $\ep$-regular vectors with respect to $A_{j_0}$ and $A_{k_0}$; but we do not need this fact. Since this configuration arises from \cite{Ko-Lu-Ro}, let us call it an {\it $\left(\alpha,\ep, H, i_0,j_0,k_0,v_{i_0},v_{j_0}^l,v_{k_0}^l\right )$-flower}.  Roughly speaking, the reader may visualize a flower with a center $A_{i_0}+v_{i_0}\cap H$, where $A_{i_0}+v_{i_0}\cap H$ sits nicely in $H$, and with $c'''(\alpha)K$ petals $(A_{j_0}+v_{j_0}^l\cap H,A_{k_0}+v_{k_0}^l\cap H)$.

 We denote by ${\mathcal{S}}$ the collections of all ordered $m$-set-tuples $A=(A_1,\dots,A_m)$ of size $|A|=s$.

\begin{proposition}\label{proposition:application:3} Let $\alpha,\eta$ be given. Then there exist constants  $c=c(\alpha)>0, C=C(\alpha,\eta)$ and $\ep=\ep(\alpha, \eta)>0$ such that the number of ordered $m$-set-tuples $A=(A_1,\dots,A_m)$ of size $s$, where $s\ge C(\alpha,\eta)N^{1/2}$, that contain a flower but not any non-trivial three-term arithmetic progression is at most $\eta^{c(\alpha)s}|\mathcal{S}|$.
\end{proposition}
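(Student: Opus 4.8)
The plan is to run the entropy-style count of \cite{Ko-Lu-Ro}: the absence of progressions in $A$ forces, inside every petal, an \emph{edge-free} pair in a suitable $\ep^{1/3}$-regular bipartite graph, and Lemma~\ref{lemma:lemmas:counting:4} then says this can happen for only an exponentially small proportion of the admissible $A$'s. Write $u:=|H|$, $K:=|V/H|$, and let $L:=c'''(\alpha)K$ be the number of petals. First fix the \emph{skeleton} $\tau=(H,i_0,j_0,k_0,v_{i_0},W)$ of a flower, where $W=\{v_{j_0}^1,\dots,v_{j_0}^L\}\subseteq V/H$ is the set of petal representatives (condition (5) attaches to each $v_{j_0}^l$ a determined $v_{k_0}^l$). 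Since $H$ has index at most $W(4m^2\alpha^{-2}\ep^{-3})$ there are $\exp(O(n))$ choices for $H$, and then at most $m^3\cdot K\cdot 2^{K}$ further choices for $\tau$, so the number of skeletons is $\exp(o(s))$ because $s\ge C(\alpha,\eta)N^{1/2}\gg n$. It therefore suffices to bound, for each fixed skeleton $\tau$, the number $N_\tau$ of $A\in\mathcal{S}$ that contain no nontrivial progression and realize a flower with skeleton $\tau$ (i.e.\ satisfy conditions (2)--(5) for the data $\tau$ and every $w\in W$); every flower being built on such a skeleton, a union bound over skeletons dominates the count in the statement. We aim to prove $N_\tau\le\eta_0^{\,Lt}|\mathcal{S}|$ with $t$ as below and $\eta_0=\eta_0(\eta)$; summing over skeletons and taking $\eta_0=\eta$ then gives the proposition with $c(\alpha):=c'''(\alpha)/(8m(\alpha))$ and $C$ enlarged.

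For each petal $w=v_{j_0}^l\in W$ introduce the bipartite graph $G_w$ whose vertex classes are the two petal cosets $v_{j_0}^l+H$ and $v_{k_0}^l+H$, joining $y$ to $z$ exactly when the point $x$ determined by requiring $\{x,y,z\}$ to be a progression with $x\in v_{i_0}+H$ belongs to $A_{i_0}$. Condition (2) says $v_{i_0}$ is an $\ep$-regular vector for $A_{i_0}$ with respect to $H$; with this, one checks precisely as in Section~\ref{section:regularity} (cf.\ the remark following Theorem~\ref{theorem:regularity:1}) that $G_w$ is $\ep^{1/3}$-regular, and a one-line computation gives $d(G_w)=|(A_{i_0})_H^{v_{i_0}}|/u\ge (1/4)|A_{i_0}|/N=t/u$, where $t:=(1/4)|A_{j_0}|u/N=s/(4mK)$. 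Since $A_{i_0},A_{j_0},A_{k_0}$ are disjoint, any such triple $\{x,y,z\}$ with $x\in A_{i_0}$, $y\in A_{j_0}$, $z\in A_{k_0}$ has three distinct terms, hence is a \emph{nontrivial} progression in $A$; therefore the hypothesis on $A$ says exactly that, for every $w\in W$, the petal slices $(A_{j_0})_H^{v_{j_0}^l}$ and $(A_{k_0})_H^{v_{k_0}^l}$ span no edge of $G_w$ --- and by conditions (3)--(4) each of these slices has at least $t$ vertices.

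For the count I would rebuild $A$ in the order: first $A_i$ for $i\notin\{j_0,k_0\}$ (this fixes every $G_w$), then the slices of $A_{j_0}$ and of $A_{k_0}$ over the cosets outside $W$ and outside $\{v_{k_0}^l:w\in W\}$ respectively, and finally the $W$-slices, processed one petal at a time. When the two slices over a petal $w$ are chosen, all previously committed vertices number at most $|A|=s<u/2$ inside each relevant coset, so they may be handed to the adversary of Lemma~\ref{lemma:lemmas:counting:4} as forbidden sets; the remaining hypotheses hold because $u=N/K\ge u_0$ for $n$ large, $\rho=d(G_w)\ge t/u$, the two slices have size $\ge t$, and $t=s/(4mK)\ge 2(u/\ep^{1/3})^{1/2}$ --- this last inequality being exactly where $s\ge C(\alpha,\eta)N^{1/2}$ is used, with $\ep$ chosen small enough that $\ep^{1/3}$ lies below the regularity threshold Lemma~\ref{lemma:lemmas:counting:4} produces from $\eta_0$. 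Hence, for each petal and whatever has been committed before it, the proportion of admissible pairs of $w$-slices that span no edge of $G_w$ is at most $\eta_0^{t}$. Running through all $L$ petals, the factors $\eta_0^{t}$ multiply while the corresponding unconstrained counts telescope --- over all earlier choices, slice sizes included --- into the number of $A\in\mathcal{S}$ whose petal slices have the prescribed sizes, which is at most $|\mathcal{S}|$; this gives $N_\tau\le\eta_0^{Lt}|\mathcal{S}|=\eta_0^{\,c'''(\alpha)s/(4m)}|\mathcal{S}|$, and summing over the $\exp(o(s))$ skeletons (using $s=o(N)$ and $s\gg n$ to absorb that factor into the exponent) finishes the proof.

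The main obstacle is the bookkeeping of this last step: turning the probabilistic conclusion of Lemma~\ref{lemma:lemmas:counting:4} into a counting inequality while (i) coping with the fact that the forbidden set for one petal depends on the slices already placed in earlier petals --- which is why the petals are processed sequentially, the adversary absorbing the running history --- and (ii) making sure the product of unconstrained petal-counts telescopes against $|\mathcal{S}|$ and not against a wasteful overcount. The only genuinely quantitative ingredient is the inequality $t\ge 2(u/\ep^{1/3})^{1/2}$, which pins down how large $C(\alpha,\eta)$ must be; every other loss (the number of skeletons, the choices of slice sizes, $u\ge u_0$) is an $\exp(o(s))$ factor that the exponential gain $\eta_0^{Lt}$ absorbs with room to spare.
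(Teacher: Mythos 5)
Your proposal is correct and follows essentially the same route as the paper: fix the flower data (your ``skeleton'' $\tau$), apply Lemma~\ref{lemma:lemmas:counting:4} once per petal to the auxiliary bipartite Cayley graph whose $\ep^{1/3}$-regularity comes from $v_{i_0}$ being an $\ep$-regular vector, multiply the per-petal losses $\eta^t$ with $t=s/(4mK)$, and absorb the polynomial-in-$N$ number of flowers using $s\gg N^{1/2}$. The only differences are bookkeeping ones --- you process the petals sequentially with the adversary absorbing earlier commitments and you make the quantitative threshold $t\ge 2(u/\ep^{1/3})^{1/2}$ explicit, whereas the paper phrases the same product bound as a probability estimate over $A$ with $A_{i_0}$ and the slice sizes fixed.
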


 It is clear that Proposition \ref{proposition:application:3} implies Theorem \ref{theorem:application:1}. Hence we just need to prove Proposition \ref{proposition:application:3}.

 First of all we shall estimate the probability that a set $A$ that contains a given $\left(\alpha,\ep, i_0,j_0,k_0,v_{i_0},v_{j_0}^l,v_{k_0}^l\right)$-flower but contains no non-trivial three-term arithmetic progressions. On this probability space we also fix $A_{i_0}$ and fix the size of $A_{j_0}^l\cap H$ and $A_{k_0}^l\cap H$ for all $l$. Hence, $v_{i_0}$ is an $\ep$-regular vector with respect to a fixed sets $A_{i_0}$ and $H$; and the sets $A_{j_0}$ and $A_{k_0}$ vary in such a way that $v_{j_0}^l, v_{k_0}^l$ satisfy (3) and (4) respectively (in other words, $A_{j_0},A_{k_0}$ intersects $H-v_{j_0}^l,H-v_{k_0}^l$ in sets of given size).

 Without loss of generality, we assume that $2v_{i_0} =v_{j_0}^l+v_{k_0}^l$ for all $l$. We define a Cayley graph between $H-v_{j_0}^l$ and $H-v_{k_0}^l$ by connecting $v_1\in H-v_{j_0}^l$ to $v_2 \in H-v_{k_0}^l$ if $(v_1+v_2)/2 \in A_{i_0}$. Since $v_{i_0}$ is $\ep$-regular with respect to $A_{i_0}$, by the observation made before proving Theorem \ref{theorem:regularity:1}, this graph is also $\ep$-regular.

  By choosing $\ep=\ep(\alpha,\eta)=\ep(\alpha)$ to be small enough, and recalling that $N^{1/2}\ll s<r = o(N)$, we may check that for each bipartite graph $(H-v_{j_0}^l,H-v_{j_0}^l)$, the assumptions of Lemma \ref{lemma:lemmas:counting:4} are satisfied with $S_1=\bigcup_{1 \le m<j_0}  (A_m\bigcap (H-v_{j_0}^l)), T_1 = A_{j_0}^l\bigcap (H-v_{j_0}^l), S_2=\bigcup_{1\le m<k_0}(A_m\bigcap (H-v_{k_0}^l))$, and $T_2 = A_{k_0}\bigcap (H-v_{k_0}^l)$. It follows that the probability each petal fails to contain a there-term arithmetic progression is less than $\eta^{(1/4)s/(mK)}$. Hence the probability that $A$ contains no non-trivial three-term arithmetic progressions is less than $\eta^{(1/4) c''(\alpha)s /m} =\eta^{c'''(\alpha)s}$.

 Now we bound the number of flowers: the number of choices for $H$ is bounded by $N^{W(4m^2\alpha^{-1/2}\ep^{-3})}$, the number of choices for $(i_0,j_0,k_0,v_{i_0},v_{j_0}^l,v_{k_0}^l)$ is bounded by $K^{4+2c''(\alpha) K}$ (which is independent of $N$). Hence there are at most $N^{C(\alpha)}$ flowers.

 Putting everything together, we infer that the number of $A$ that contains some flower but not any non-trivial three-term arithmetic progression is at most

$$N^{C(\alpha)} \eta^{c'''(\alpha)s} |{\mathcal{S}}| \le \eta^{c''''(\alpha)s} |{\mathcal{S}}|,$$

\noindent completing the proof.

\appendix

\section{Proof of Lemma \ref{lemma:lemmas:random:3}}

 Without loss of generality, we
just work with the real part of $\widehat{1}_R$. We shall prove $\P_R(\sup_{\xi\neq 0}|\Re
\widehat{1}_R(\xi)|\ge \lambda/N)=o(1)$ for some appropriately
chosen $\lambda$. Since the treatment for other cases is similar, we
just show that $\P(\Re \widehat{1}_R(\xi)\ge \lambda/N)$ is
very small for each fixed $\xi\neq 0$. For convenience, put

$$X=N\Re \widehat{1}_R(\xi) = \sum_{v\in V}1_R(v)\Re e(-\langle v,\xi \rangle):=\sum_{v\in V}X_v.$$

\noindent One observes that $X$ is a sum of $N$ linearly independent real variables
$X_v$'s. Choosing $t$ to be a positive number smaller than 1, we have

\begin{align*}
\P_R(X \ge \lambda)&=\P_R(\exp(tX) \ge \exp(t\lambda))\\
&\le \E(\exp(tX))/\exp(t\lambda)\\
&=\prod\E(\exp(tX_v))/\exp(t\lambda)\\
&=\exp(t\E X)\prod \E(\exp(tX_v-t\E(X_v)))/\exp(t\lambda)\\
&=\exp(t\E X)\prod \E(\exp(tY_v))/\exp(t\lambda),
\end{align*}

\noindent where $Y_v:=X_v-E(X_v)=(1_R(v)-q) \Re e(-\langle v,\xi \rangle).$

 Notice that $|Y_v|\le 1$ and $0< t\le 1$. We thus have $\exp(tY_v)\le 1+tY_v+t^2Y_v^2$. Hence

$$\E(\exp(tY_v))\le 1+\E(t^2Y_v^2) \le \exp(\E(t^2Y_v^2)).$$

\noindent Also, because $\E X = q \Re \sum_{v\in V}e(-\langle v,\xi \rangle) =0$, it follows that

$$\P(X \ge \lambda)\le \prod \E(\exp(tY_v))/\exp(t\lambda)\le \exp(t^2 \sum_{v\in V}\E(Y_v^2))/\exp(t\lambda).$$

\noindent On the other hand, it is clear from the definition of $Y_v$ that $\sum_{v\in
V} \E(Y_v^2)\le qN.$ Thus

$$\P(X>\lambda)\le \exp(t^2qN-t\lambda).$$

\noindent By choosing $\lambda = |R|/\log N$ and
$t=\lambda/(2qN)=1/(2\log N)$ (thus $t<1$), we deduce that

$$P(X\ge |R|/\log N)\le \exp(-|R|/(2\log^2 N).$$

\noindent Hence

$$\P(\sup_{\xi \neq 0}\Re \widehat{1}_S(\xi)>|R|/(N\log N)) \le
N\exp(-|R|/(2\log^2 N))=o(1).$$

\noindent (Note that the choice for $\lambda$ above is not optimal,
but it is enough for our goal.)

\vskip .2in

\noindent {\bf Acknowledgement}. The author would like to thank prof. Van H. Vu for discussions and encouragement. He is also grateful to Philip M. Wood for reading the note and giving many valuable suggestions.

\end{document}